\documentclass[review]{elsarticle}
\usepackage{lineno}
\usepackage[latin1]{inputenc}
\usepackage{times}
\usepackage{amssymb,mathrsfs, amsmath}
\usepackage{amsmath,amsthm}
\usepackage{amssymb}
\usepackage{latexsym}
\usepackage{amsfonts}
\usepackage{epsfig}
\usepackage{hyperref}
\usepackage{graphicx}
\usepackage{graphics}
\usepackage[all]{xy}
\usepackage{tikz-cd}
\usepackage[mathscr]{euscript}
\usepackage{mathrsfs}
\usepackage{fancyhdr}
\newtheorem{thm}{Theorem}[section]

\newtheorem{defn}{Definition}[section]
\newtheorem{prop}{Proposition}[section]

\newtheorem{lem}{Lemma}[section]

\newtheorem{rem}{Remark}[section]

\newtheorem{cor}{Corollary}[section]

\newtheorem{exmpl}{Example}[section]

\journal{Letters in Mathematical Physics}









\bibliographystyle{elsarticle-num}
\pagestyle{fancy}
\fancyhf{}
\fancyhead[LE,RO]{}
\fancyhead[RE,LO]{Cohomology structure}
\begin{document}
\begin{frontmatter}

\title{Cohomology and deformations  of associative superalgebras}
\author{R. B. Yadav\fnref{}\corref{mycorrespondingauthor}}
\cortext[mycorrespondingauthor]{Corresponding author}
\ead{rbyadav15@gmail.com}
\address{Sikkim University, Gangtok, Sikkim, 737102, \textsc{India}}
\begin{abstract}
In this paper we generalize to associative superalgebras Gerstenhaber's work on cohomology structure of an associative algebra. We introduce two multiplications $\cup$ and $[-,-]$ on the  cochain complex $C^*(A;A)$ of an associative superalgebra $A$.  We prove that these  multiplications induce two multiplications on $H^*(A;A)$ and make it  graded  commutative  superalgebra and   graded Lie superalgebra, respectively. Moreover, we introduce formal  deformation theory of associative superalgebras.
\end{abstract}

\begin{keyword}
\texttt{Superalgebra, module, cohomology, graded Lie superalgebra,  differential graded superalgebra, formal deformation}
\MSC[2020]   17B70 \sep 17B60 \sep 16E40   \sep 17B10 \sep 17C70  \sep   16S80
\end{keyword}
\end{frontmatter}
\section{Introduction}\label{rbsec1}

The cohomology theory of associative algebra was studied by G. Hochschild in \cite{MR11076}, \cite{MR16762}, \cite{MR22842}; and by  Murray Gerstenhaber in \cite{MR161898}. Gerstenhaber proved that there exists a cup product multiplication $\cup$ in $H^*(A;A)$ with respect to which it is a commutative graded associative algebra. It was shown that if $P$ is a two sided module over $A$, then  $H^*(A;P)$ is a two sided module over $H^*(A;A)$. He introduced a  bracket product $[-,-]$ with respect to which $H^*(A;A)$ is a graded Lie algebra.

The deformation is a tool to study a mathematical object by deforming it into a family of the same kind of objects depending on a certain parameter.  Deformation theory of algebraic structures was introduced by Gerstenhaber for rings and algebras in a series of papers \cite{MR161898}, \citep{MR171807}, \citep{MR0207793}, \citep{MR240167}. Recently,  deformation theory of  superalgebraic structures has been studied by many authors  \citep{MR4278718}, \citep{MR2654093}, \citep{MR4117244}, \citep{MR2761298}, \citep{MR871615}, \citep{MR871615}.

 Graded  algebras are of interest in physics in the context of `supersymmetries' relating particles of differing statistics. In mathematics, graded algebras are known for some time in the context of deformation theory \cite{MR0438925}. A superalgebra is a $\mathbb{Z}_2$-graded algebra $A=A_{0}\oplus A_{1}$ (that is, if $a\in A_\alpha$, $b\in A_\beta$, then $ab\in A_{\alpha+\beta}$, $\alpha,\beta\in \mathbb{Z}_2=\{0,1\}$). An associative superalgebra is a superalgebra $A=A_{0}\oplus A_{1}$ such that $(ab)c=a(bc)$, for all $a,b,c$ in $A.$

The goal of this paper is to study different algebraic structures on the cochain complex $C^*(A;A)$, the cohomology $H^*(A;A)$ of an associative superalgebra and application of this study in  the formal deformation theory of $A.$  Organization of the paper is as follows. In Section \ref{rbsec2}, we recall some basic definitions.  In Section \ref{rbsec3}, we introduce $\mathbb{Z}$-graded Lie and pre-Lie superalgebras.  In Section \ref{rbsec4}, we introduce  supermodules over superalgebras.  In Section \ref{rbsec5}, we introduce derivations of $\mathbb{Z}$-graded superalgebras. In Section \ref{rbsec6}, we discuss  cohomology of associative superalgebras.  In this section we establish a fundamental isomorphism between  $H_i^n(A;P)$ and  $H_i^{n-1}(A;C^1(A;P))$, for $n\ge 2,$ $i=0,1$ as in \cite{MR11076} for associative algebras.  In Section \ref{rbsec7}, we compute cohomology of associative superalgebras in  dimensions $0$, $1$ and $2$. In Section \ref{rbsec10}, we introduce a cup product $\cup$ for the cohomology of an associative superalgebra $A$. In this section we prove that $\{C^*(A;A),\cup\}$ is a $\mathbb{Z}$-graded associative superalgebra and coboundary map $\delta$ is a derivation on it. Also, we prove that $\{H^*(A;A),\cup\}$ is a $\mathbb{Z}$-graded associative superalgebra. In section \ref{rbsec8}, we introduce $\mathbb{Z}$-graded right pre-Lie supersystem  and discuss the $\mathbb{Z}$-graded right pre-Lie superalgebra given by it.   We show the existence of a bracket product $[-,-]$ on $C^*(A;A)$ with respect to which it is a $\mathbb{Z}$-graded Lie superalgebra and $\delta$ is a derivation  on $\{C^*(A;A),[-,-]\}$. We prove that $\{H^*(A;A),[-,-]\}$ $\mathbb{Z}$-graded Lie superalgebra. If  $P$ is a two sided module over an associative superalgebra $A$, then $H^*(A;P)$ is a  two sided module over $\{H^*(A;A),\cup\}$. In section \ref{rbsec20}, we introduce formal deformation theory of associative superalgebras. We prove that obstruction cochain  to the deformations are $3$-cocycles. We discuss equivalence of  deformations and prove that cohomology class of the  infinitesimal of a deformation depends only on its equivalence class. 
\section{Associative  superalgebra}\label{rbsec2}
In this section, we recall definitions of graded algebra, associative superalgebra,  Lie superalgebra. We give some examples of associative superalgebras.  Throughout the paper we denote a fixed field of characteristic $0$  by $K$.
\begin{defn}\label{graded vector space}
Let $\Delta$ be a commutative group and $K$ be a field. A $\Delta$-graded vector space  is a vector space $V$ over $K$ together with a family of subspaces $\{V^\alpha\}_{\alpha\in \Delta}$, indexed by $\Delta$ such that  $V=\bigoplus_{\alpha\in \Delta}V^\alpha$, the direct sum of $V^\alpha$'s.
An element $a$ in  $V^\alpha$ is called homogeneous of degree $\alpha$, we write  $\deg(a)=\alpha.$

A $\Delta$-graded algebra   over $K$ is a $\Delta$-graded vector space $E=\bigoplus_{\alpha\in\Delta}E^\alpha$ together with a bilinear map $m:E\times E\rightarrow E$ such that $m(E^\alpha\times  E^\beta)\subset E^{\alpha+\beta}$ for all $\alpha,\;\beta\in\Delta$.
 An  associative superalgebra  is a $\mathbb{Z}_2$-graded algebra $A=A_{0}\oplus A_{1}$ such that $m(m(a,b),c)=m(a,m(b,c)),$ for all $a,b,c\in A.$  A  Lie superalgebra is a $\mathbb{Z}_2$-graded algebra such that following conditions are satisfied:
 \begin{enumerate}
   \item $m(a,b)=-(-1)^{\alpha\beta}m(b,a),$
   \item  $(-1)^{\alpha\gamma}m(m(a,b),c)+(-1)^{\beta\alpha}m(m(b,c),a)+(-1)^{\gamma\beta}m(m(c,a),b)=0,$
 \end{enumerate}
  for all $a\in E_\alpha$, $b\in E_\beta$, $c\in E_\gamma$, $\alpha,\beta,\gamma\in\mathbb{Z}_2$.

In any $\mathbb{Z}_2$-graded vector space $V=V_0\oplus V_1$  we use a notation in which we replace degree $deg(a)$ of a homogeneous element $a\in V$ by `a' whenever $deg(a)$ appears in an exponent; thus, for example $(-1)^{ab}=(-1)^{deg(a)deg(b)}$.
  Let $V=V_0\oplus V_1$ and $W=W_0\oplus W_1$ be $\mathbb{Z}_2$-graded vector spaces over a field $K$. A linear map $f:V \to W$ is said to  be homogeneous of degree $\alpha$ if $f(a)\in W$ is homogeneous and  $\deg(f(a))- \deg(a)=\alpha$, for all $a\in V_\beta$, $\beta\in \{0,1\}$. We denote degree of $f$ by $\deg(f)$.
\end{defn}
\begin{exmpl}
 Let $V=V_{0}\oplus V_{1}$ be a $\mathbb{Z}_2$-graded vector space. Consider  the vector space $A$ of all homogeneous endomorphisms of $V$. Then  $A=End_{0}(V)\oplus End_{1}(V),$  where $$End_\alpha (V)=\{f\in End (V):f(V_\beta)\subset V_{\alpha+\beta}, \forall \beta\in \mathbb{Z}_2\},\;\;\alpha\in \mathbb{Z}_2.$$  $A$ is an associative superalgebra with  respect to composition operation.
\end{exmpl}
\section{Graded Lie and pre-Lie superalgebras}\label{rbsec3}
\begin{defn}
We call a $\mathbb{Z}\times\mathbb{Z}_2$-graded algebra $E=\bigoplus_{(\alpha,\beta)\in \mathbb{Z}\times \mathbb{Z}_2} E^{\alpha}_{\beta}$
  a $\mathbb{Z}$-graded superalgebra. An element $a$ in $E^{\alpha}_{\beta}$ is said to be homogeneous of degree $(\alpha,\beta)$, for all $(\alpha,\beta)\in \mathbb{Z}\times \mathbb{Z}_2.$  The anti-isomorph or opposite $A'$ of a $\mathbb{Z}$-graded superalgebra is the superalgebra which as a $K$-vector space, is identical with $A$, but in which multiplication $m'$ is given by $m'(a,b)=m(b,a)$, where $m$ is the multiplication in $A.$

For all $\alpha\in \mathbb{Z}$, $\beta\in \mathbb{Z}_2$, from here onwards whenever $\alpha+\beta$ appears in an exponent we understand it as $\alpha +\beta \mod 2.$
We call a $\mathbb{Z}$-graded superalgebra $E$ a $\mathbb{Z}$-graded associative superalgebra if
$$m(m(a,b),c)=m(m(b,c),a),$$
for all homogeneous $a,b,c\in E$. Clearly every $\mathbb{Z}$-graded associative superalgebra is an associative algebra.
We call a $\mathbb{Z}$-graded superalgebra $E$ a $\mathbb{Z}$-graded commutative superalgebra if
$$m(a,b)=(-1)^{\alpha_1\alpha_2+\beta_1\beta_2} m(b,a),$$
for all $a\in E^{\alpha_1}_{\beta_1}$, $b\in E^{\alpha_2}_{\beta_2}$. We call a $\mathbb{Z}$-graded superalgebra $E$ a $\mathbb{Z}$-graded Lie superalgebra if following conditions are satisfied
\begin{enumerate}
   \item  If $a\in E^{\alpha_1}_{\beta_1}$ and $b\in E^{\alpha_2}_{\beta_2}$ then $$m(a,b)=-(-1)^{\alpha_1\alpha_2+\beta_1\beta_2}m(b,a).$$
   \item If $a\in E^{\alpha_1}_{\beta_1}$, $b\in E^{\alpha_2}_{\beta_2}$ and $c\in E^{\alpha_3}_{\beta_3}$, then
\begin{eqnarray}\label{SASrbeqn104}
(-1)^{\alpha_1\alpha_3+\beta_1\beta_3}m(m(a,b),c)+(-1)^{\alpha_2\alpha_1+\beta_2\beta_1}m(m(b,c),a)  && \nonumber\\
+(-1)^{\alpha_3\alpha_2+\beta_3\beta_2}m(m(c,a),b) &=&0
\end{eqnarray}
\end{enumerate}
\end{defn}
\begin{defn}\label{SASrbde10}
  We call a $\mathbb{Z}$-graded superalgebra $E$ a  $\mathbb{Z}$-graded right pre-Lie superalgebra if
  \begin{eqnarray}\label{SASpreAlgb}
    &&   m(m(c,a),b)- (-1)^{\alpha_1\alpha_2+\beta_1\beta_2}m(m(c,b),a)\nonumber \\
     &=& m(c,m(a,b))-(-1)^{\alpha_1\alpha_2+\beta_1\beta_2}m(c,m(b,a)),
  \end{eqnarray}
  for all $a\in E^{\alpha_1}_{\beta_1}$, $b\in E^{\alpha_2}_{\beta_2}$ and $c\in E^{\alpha_3}_{\beta_3}$.
  An antiisomorph $A'$ of a $\mathbb{Z}$-graded right pre-Lie superalgebra is called $\mathbb{Z}$-graded left pre-Lie superalgebra.
\end{defn}
\begin{thm}\label{SASrbthm2}
Let $A$ be a $\mathbb{Z}$-graded pre-Lie superalgebra. Define  a multiplication $[-,-]:A\times A\to A$ by
\begin{equation}\label{SASrbeqn120}
 [a,b]=m(a,b)-(-1)^{\alpha_1\alpha_2+\beta_1\beta_2}m(b,a),
\end{equation}
for all $a\in A^{\alpha_1}_{\beta_1}$, $b\in A^{\alpha_2}_{\beta_2}$.
Then in the bracket product $[-,-]$ $A$ is a $\mathbb{Z}$-graded Lie superalgebra.
\end{thm}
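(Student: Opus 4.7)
The plan is to verify the two defining axioms of a $\mathbb{Z}$-graded Lie superalgebra for the bracket $[-,-]$, namely graded antisymmetry and the graded Jacobi identity \eqref{SASrbeqn104}.

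First I would check graded antisymmetry. This is immediate from \eqref{SASrbeqn120}: swap the roles of $a$ and $b$ to obtain $[b,a]=m(b,a)-(-1)^{\alpha_1\alpha_2+\beta_1\beta_2}m(a,b)$, and since $\alpha_1\alpha_2+\beta_1\beta_2 \equiv \alpha_2\alpha_1+\beta_2\beta_1$, multiplying by $-(-1)^{\alpha_1\alpha_2+\beta_1\beta_2}$ recovers $[a,b]$. No pre-Lie hypothesis is needed here.

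The core of the proof is the Jacobi identity. I would introduce the graded associator
\[
\mathrm{as}(c,a,b)\;=\;m(m(c,a),b)-m(c,m(a,b)),
\]
and observe that the pre-Lie axiom \eqref{SASpreAlgb} says precisely
$\mathrm{as}(c,a,b)=(-1)^{\alpha_1\alpha_2+\beta_1\beta_2}\mathrm{as}(c,b,a)$, i.e.\ the associator is \emph{graded symmetric} in its last two slots. Now expand each of the three triple brackets $[[a,b],c]$, $[[b,c],a]$, $[[c,a],b]$ via \eqref{SASrbeqn120}; each yields four $m$-terms, so the cyclic sum weighted by the factors $(-1)^{\alpha_i\alpha_k+\beta_i\beta_k}$ appearing in \eqref{SASrbeqn104} contains twelve terms. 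I would then pair these twelve terms into three groups of four, one for each choice of outermost entry $a$, $b$, or $c$, and rewrite each group in the form $\mathrm{as}(x,y,z)-(-1)^{\alpha_y\alpha_z+\beta_y\beta_z}\mathrm{as}(x,z,y)$. By the graded symmetry of the associator in the last two arguments, each such group vanishes, and the total sum is zero.

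The main obstacle is purely bookkeeping: one must track the six sign factors $(-1)^{\alpha_i\alpha_j+\beta_i\beta_j}$ coming from the three bracket expansions, together with the three cyclic prefactors from \eqref{SASrbeqn104}, and verify that after regrouping by outermost entry the residual signs line up exactly with those demanded by the pre-Lie identity. Once the identification with the associator is made, the cancellation is automatic; the only subtlety is confirming that the $\mathbb{Z}$-degree signs and the $\mathbb{Z}_2$-degree signs behave compatibly, which follows from the convention $\alpha+\beta \bmod 2$ in exponents and the bilinearity of the pairing $(i,j)\mapsto \alpha_i\alpha_j+\beta_i\beta_j$.
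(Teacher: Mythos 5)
Your proposal is correct and follows essentially the same route as the paper: immediate graded antisymmetry, then the graded Jacobi identity by expanding the three weighted double brackets and invoking the pre-Lie axiom \eqref{SASpreAlgb} three times to force cancellation of the twelve terms. Your associator formulation (symmetry of $m(m(x,y),z)-m(x,m(y,z))$ in the last two slots, with cancellation organized by the outermost entry) is just a cleaner bookkeeping of the same computation the paper carries out in Equations \eqref{SASrbeqn100}--\eqref{SASrbeqn102}, and the sign verification goes through exactly as you describe.
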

\begin{proof}
  Clearly the bracket product $[-,-]$  satisfies $[a,b]=-(-1)^{\alpha_1\alpha_2+\beta_1\beta_2}[b,a]$,  for all $a\in A^{\alpha_1}_{\beta_1}$, $b\in A^{\alpha_2}_{\beta_2}$.  For all $a\in A^{\alpha_1}_{\beta_1}$, $b\in A^{\alpha_2}_{\beta_2}$ and $c\in A^{\alpha_3}_{\beta_3}$, by using relations \ref{SASpreAlgb}, \ref{SASrbeqn120}, we have
\begin{eqnarray}\label{SASrbeqn100}
&&(-1)^{\alpha_1\alpha_3+\beta_1\beta_3}[[a,b],c] \nonumber \\
&=&(-1)^{\alpha_1\alpha_3+\beta_1\beta_3}\{m(m(a,b),c)-(-1)^{\alpha_1\alpha_2+\beta_1\beta_2}m(m(b,a),c)\}\nonumber \\
&& (-1)^{\alpha_3\alpha_2+\beta_3\beta_2}\{-m(c,m(a,b))+(-1)^{\alpha_1\alpha_2+\beta_1\beta_2}m(c,m(b,a))\} \nonumber\\
&=&(-1)^{\alpha_1\alpha_3+\beta_1\beta_3}\{m(m(a,b),c)-(-1)^{\alpha_1\alpha_2+\beta_1\beta_2}m(m(b,a),c)\}\nonumber \\
&&(-1)^{\alpha_3\alpha_2+\beta_3\beta_2}\{-m(m(c,a),b)+(-1)^{\alpha_1\alpha_2+\beta_1\beta_2}m(m(c,b),a)\}
\end{eqnarray}
 \begin{eqnarray}\label{SASrbeqn101}
&&(-1)^{\alpha_2\alpha_1+\beta_2\beta_1}[[b,c],a] \nonumber \\ &=&(-1)^{\alpha_2\alpha_1+\beta_2\beta_1}\{m(m(b,c),a)-(-1)^{\alpha_2\alpha_3+\beta_2\beta_3}m(m(c,b),a)\}\nonumber \\
&&+(-1)^{\alpha_1\alpha_3+\beta_1\beta_3}\{-m(a,m(b,c))+(-1)^{\alpha_2\alpha_3+\beta_2\beta_3}m(a,m(c,b))\}\nonumber\\
&=&(-1)^{\alpha_2\alpha_1+\beta_2\beta_1}\{m(m(b,c),a)-(-1)^{\alpha_2\alpha_3+\beta_2\beta_3}m(m(c,b),a)\}\nonumber \\
&&+(-1)^{\alpha_1\alpha_3+\beta_1\beta_3}\{-m(m(a,b),c)+(-1)^{\alpha_2\alpha_3+\beta_2\beta_3}m(m(a,c),b)\}
\end{eqnarray}
\begin{eqnarray}\label{SASrbeqn102}
  &&(-1)^{\alpha_3\alpha_2+\beta_3\beta_2}[[c,a],b] \nonumber \\
  &=&(-1)^{\alpha_3\alpha_2+\beta_3\beta_2}\{m(m(c,a),b)-(-1)^{\alpha_3\alpha_1+\beta_3\beta_1}m(m(a,c),b)\}\nonumber \\
  &&(-1)^{\alpha_2\alpha_1+\beta_2\beta_1}\{-m(b,m(c,a))+(-1)^{\alpha_3\alpha_1+\beta_3\beta_1}m(b,m(a,c))\}\nonumber\\
  &=&(-1)^{\alpha_3\alpha_2+\beta_3\beta_2}\{m(m(c,a),b)-(-1)^{\alpha_3\alpha_1+\beta_3\beta_1}m(m(a,c),b)\}\nonumber \\
&&(-1)^{\alpha_2\alpha_1+\beta_2\beta_1}\{-m(m(b,c),a)+(-1)^{\alpha_3\alpha_1+\beta_3\beta_1}m(m(b,a),c)\}
\end{eqnarray}
From Equations \ref{SASrbeqn100}, \ref{SASrbeqn101} and  \ref{SASrbeqn102}  we conclude that Equation \ref{SASrbeqn104} holds. Therefore $A$ is a $\mathbb{Z}$-graded Lie superalgebra.
\end{proof}
\section{Supermodules over $\mathbb{Z}$-graded superalgebras}\label{rbsec4}
\begin{defn}
Let $A$ be a $\mathbb{Z}$-graded  superalgebra,  $P=\bigoplus_{(\alpha,\beta)\in \mathbb{Z}\times \mathbb{Z}_2} P^{\alpha}_{\beta}$ be a   $\mathbb{Z}\times \mathbb{Z}_2$-graded $K$-vector space and $\rho :P\times A\to P$ be a $K$-bilinear map such that $\rho(P^{\alpha_1}_{\beta_1},A^{\alpha_2}_{\beta_2})\subset P^{\alpha_1+\alpha_2}_{\beta_1+\beta_2}$ .  If $A$ is  a $\mathbb{Z}$-graded associative superalgebra, then we say that $P$ is a right supermodule over $A$, denoting $\rho(x,a)$ by $xa$ and $m(a,b)$ by $ab$, provided $x(ab)=(xa)b,$ for all $x\in P$, $a,b\in A.$ Every $\mathbb{Z}$-graded associative superalgebra is a right supermodule over itself.
If $A$ is  a $\mathbb{Z}$-graded Lie  superalgebra, then we say that $P$ is a right  supermodule over $A$, denoting $\rho(x,a)$ by $[x,a]$ and $m(a,b)$ by $[a,b]$, provided $[[x,a],b]=[x,[a,b]]+(-1)^{\alpha_3\alpha_2+\beta_3\beta_2}[[x,b],a],$ for all $x\in P^{\alpha_1}_{\beta_1}$, $a\in A^{\alpha_2}_{\beta_2},$ $b\in A^{\alpha_3}_{\beta_3}$. Every $\mathbb{Z}$-graded Lie superalgebra is a right supermodule over itself.
If $A$ is  a $\mathbb{Z}$-graded right pre-Lie superalgebra, then we say that $P$ is a right supermodule over $A$, denoting $\rho(x,a)$ by $x\circ a$ and $m(a,b)$ by $ab$, provided
$$(x\circ a)\circ b- (-1)^{\alpha_3\alpha_2+\beta_3\beta_2}(x\circ b)\circ a = x\circ (ab)-(-1)^{\alpha_3\alpha_2+\beta_3\beta_2}(x\circ (ba)),$$
for all $x\in P^{\alpha_1}_{\beta_1}$, $a\in A^{\alpha_2}_{\beta_2}$ and $b\in A^{\alpha_3}_{\beta_3}$.  Every $\mathbb{Z}$-graded right pre-Lie superalgebra is a right supermodule over itself.

We call a right supermodule $P$ over the anti-isomorph $A'$ of  a $\mathbb{Z}$-graded associative or  Lie  superalgebra $A$ as left  supermodule over it.
We say that $P$ is a (two sided) supermodule  over a $\mathbb{Z}$-graded (associative or  Lie) superalgebra $A$ if $A\oplus P$ is a $\mathbb{Z}$-graded (associative or  Lie) superalgebra such that $A$ is  subsuperalgebra of $A\oplus P$ and  $m(x,y)=0,$ for all $x,y\in P.$ Clearly, if $P$ is a (two sided) supermodule  over a $\mathbb{Z}$-graded (associative or  Lie) superalgebra $A$ then it is  a right as well as left supermodule  over  $A$.  If $P$ is a right supermodule over a $\mathbb{Z}$-graded  commutative superalgebra $A$, then if we define a left action $A\times P\to P$ of $A$  on $P$  by $ax=(-1)^{\alpha_1\alpha_2+\beta_1\beta_2}xa,$ for all  $x\in P^{\alpha_1}_{\beta_1}$, $a\in A^{\alpha_2}_{\beta_2}$, then  $P$  becomes a (two sided) supermodule over $A.$  If $P$ is a right supermodule over a $\mathbb{Z}$-graded Lie superalgebra $A$, then if we define  a left action $A\times P\to P$ of $A$  on $P$ given by $[a,x]=-(-1)^{\alpha_1\alpha_2+\beta_1\beta_2}[x,a],$ for all  $x\in P^{\alpha_1}_{\beta_1}$, $a\in A^{\alpha_2}_{\beta_2}$,  $P$  becomes  a  (two sided) supermodule over $A.$
\end{defn}
\section{Derivations of $\mathbb{Z}$-graded superalgebras}\label{rbsec5}
\begin{defn}\label{SASrbdefn110}
Let $A=\bigoplus_{(\alpha,\beta)\in \mathbb{Z}\times \mathbb{Z}_2} A^{\alpha}_{\beta}$ be a $\mathbb{Z}$-graded superalgebra. A $K$-linear map $D:A\to A$ is called left derivation of degree $(\alpha,\beta)$ of $A$ if $D$ is homogeneous of  degree $(\alpha,\beta)$ and
\begin{equation}\label{SASrbeqn110}
  D(ab)=(Da)b+(-1)^{\alpha\alpha_1+\beta\beta_1}a(Db),
\end{equation}
  for all $a\in A^{\alpha_1}_{\beta_1}$ and $b\in A^{\alpha_2}_{\beta_2}.$ A $K$-linear map $D:A\to A$ is called a right derivation of degree $(\alpha,\beta)$ of $A$ if $D$ is homogeneous of degree $(\alpha,\beta)$ and
\begin{equation}\label{SASrbeqn111}
D(ab)=(-1)^{\alpha\alpha_2+\beta\beta_2}(Da)b+a(Db),
\end{equation}
for all $a\in A^{\alpha_1}_{\beta_1}$ and $b\in A^{\alpha_2}_{\beta_2}.$
Let $\mathcal{D}=\bigoplus_{(\alpha,\beta)\in \mathbb{Z}\times \mathbb{Z}_2} \mathcal{D}^{\alpha}_{\beta}$ be the vector space obtained by taking  direct sum of the vector spaces $\mathcal{D}^{\alpha}_{\beta}$ of  right derivations of $A$ of degree $(\alpha,\beta)$, $(\alpha,\beta)\in \mathbb{Z}\times \mathbb{Z}_2$. For $D_1\in \mathcal{D}^{\alpha_1}_{\beta_1}$, $D_2\in \mathcal{D}^{\alpha_2}_{\beta_2}$, if we define $$[D_1,D_2]=D_1D_2-(-1)^{\alpha_1\alpha_2+\beta_1\beta_2}D_2D_1,$$ then it can be easily verified that $[D_1,D_2]$ is a right derivation of $A$ of degree $(\alpha_1+\alpha_2,\beta_1+\beta_2)$ and  with this multiplication $\mathcal{D}$ is a $\mathbb{Z}$-graded Lie  superalgebra. Similar statement can be given if $\mathcal{D}=\bigoplus_{(\alpha,\beta)\in \mathbb{Z}\times \mathbb{Z}_2} \mathcal{D}^{\alpha}_{\beta}$ is  the vector space obtained by taking  direct sum of the  vector spaces $\mathcal{D}^{\alpha}_{\beta}$ of  left derivations of $A$ of degree $(\alpha,\beta)$, $(\alpha,\beta)\in \mathbb{Z}\times \mathbb{Z}_2$.  If $A=\bigoplus_{(\alpha,\beta)\in \mathbb{Z}\times \mathbb{Z}_2} A^{\alpha}_{\beta}$ is a $\mathbb{Z}$-graded associative  superalgebra and $ a\in A^{\alpha}_{\beta}$, then if we define two $K$-linear maps  $D^a_1,D^a_2:A\to A$ by
\begin{equation}\label{SASrbeqn112}
  D^a_1b=ab-(-1)^{\alpha\alpha'+\beta\beta'}ba
\end{equation}
\begin{equation}\label{SASrbeqn113}
  D^a_2b=ba-(-1)^{\alpha\alpha'+\beta\beta'}ab,
\end{equation}
  for all $b\in A^{\alpha'}_{\beta'}$, then $D^a_1$ and  $D^a_2$ are left and right derivations of $A$, respectively of degree $(\alpha,\beta).$  Similarly if  $A=\bigoplus_{(\alpha,\beta)\in \mathbb{Z}\times \mathbb{Z}_2} A^{\alpha}_{\beta}$ is a $\mathbb{Z}$-graded Lie  superalgebra and $ a\in A^{\alpha}_{\beta}$, then if we define
  \begin{equation}\label{SASrbeqn114}
 D^a_1b=[a,b]
\end{equation}
\begin{equation}\label{SASrbeqn115}
  D^a_2b=[b,a],
\end{equation}
for all $b\in A^{\alpha'}_{\beta'}$, then $D^a_1$ and  $D^a_2$ are left and right derivations of $A$, respectively of degree $(\alpha,\beta).$ $D^a_1$ and  $D^a_2$ are called inner derivations of $A$ induced by $a$.
\end{defn}
Let $A$ be a $\mathbb{Z}$-graded superalgebra and $P$ be a (two sided) module over $A.$ A $K$-linear map $D:A\to P$ is called a left  derivation, respectively a right derivation, of degree $\alpha$ of $A$ into $P$ if \ref{SASrbeqn110}, respectively \ref{SASrbeqn111} holds, for all $a\in A^{\alpha_1}_{\beta_1}$ and $b\in A^{\alpha_2}_{\beta_2}.$ If A is $\mathbb{Z}$-graded  associative or Lie superalgebra, then we can define left and right inner derivations   using relations \ref{SASrbeqn112}, \ref{SASrbeqn113},\ref{SASrbeqn114}, \ref{SASrbeqn115}. In this case, we choose $ a\in P^{\alpha}_{\beta}$.
\begin{defn}
 We call a  $\mathbb{Z}$-graded superalgebra $A$ as a differential graded superalgebra if it is equipped with  a (right or left) derivation $D:A\to A$ of degree $(1,0)$ such that $d^2=0.$
\end{defn}
\section{Cohomology of associative  superalgebras}\label{rbsec6}
Let $V=V_0\oplus V_1$, $W=W_0\oplus W_1$ be $\mathbb{Z}_2$-graded $K$-vector spaces.  An n-linear map $f:V \underset{n\; times}{\underbrace{\times\cdots\times}}V\to W$ is said to  be homogeneous of degree $\alpha$ if for all homogeneous  $x_i\in V$, $1\le i\le n$, $f(x_1,\cdots, x_n)$  is a homogeneous element in $W$  and  $\deg(f(x_1,\cdots, x_n))-\sum_{i=1}^{n}\deg(x_i))=\alpha$.  We denote the  degree of a homogeneous $f$ by $\deg(f)$.   We use a notation in which we replace degree $\deg(f)$  by $`f'$ whenever $\deg(f)$ appears in an exponent; thus, for example $(-1)^{\deg(f)}=(-1)^f$. For each $n\ge 0$, we define  a $K$-vector space  $C^{n}(V;W)$ as follows:  For $n\ge 1$, $C^{n}(V;W)$ consists of   homogeneous $n$-linear maps  $ f :V \underset{n\; times}{\underbrace{\times\cdots\times}}V\to W$,
and $C^0(V;W)=W$. Clearly,  $C^{n}(V;W)=C_0^{n}(V;W)\oplus C_1^{n}(V;W)$, where $C_i^{n}(V;W)$  is the $K$-vector subspace of $C^{n}(A;P)$ consisting of  elements of degree $i$ with  $i=0,1$.

 Let $A=A_0\oplus A_1$ be an associative  superalgebra and $P=P_0\oplus P_1$ be a (two sided) supermodule over $A$.
We define two $K$-bilinear maps   $$A\times C^{1}(A;P)\to C^{1}(A;P)\;\text{ and}\;  C^{1}(A;P)\times A\to C^{1}(A;P)$$ (we use same symbol $*$ for both the maps and differentiate them from context) by
\begin{eqnarray}\label{rbSLM1}
(a*f)(a_1)&=&af(a_1),
\end{eqnarray}
\begin{eqnarray}\label{rbSLM2}
(f*a)(a_1) &=&f(aa_1)-f(a)a_1,
\end{eqnarray}
for all $a,a_1\in A$, $f\in C^{1}(A;P)$. We have following proposition:
\begin{prop}
$C^{1}(A;P)$ is a  (two sided) supermodule over $A$.
\end{prop}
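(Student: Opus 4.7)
The plan is to verify the definition of two-sided supermodule from Section \ref{rbsec4}: that $A \oplus C^{1}(A;P)$, equipped with the semidirect product multiplication $(a,f)(b,g) = (ab,\ a*g + f*b)$, is an associative superalgebra in which $A$ is a subsuperalgebra and the product on $C^{1}(A;P)$ vanishes. This amounts to confirming two things: that the proposed actions respect the $\mathbb{Z}_{2}$-grading, and that three associativity identities hold.

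First, I would verify the grading compatibility. For $a \in A_{\alpha}$ and $f \in C^{1}(A;P)$ homogeneous of degree $\phi$, evaluating at a homogeneous $a_{1} \in A_{\alpha_{1}}$ gives $(a*f)(a_{1}) = a f(a_{1}) \in P_{\alpha+\phi+\alpha_{1}}$ and $(f*a)(a_{1}) = f(aa_{1}) - f(a)a_{1} \in P_{\phi+\alpha+\alpha_{1}}$, so both $a*f$ and $f*a$ are homogeneous $1$-cochains of degree $\alpha + \phi$.

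Next, I would verify the three associativity identities required for $A \oplus C^{1}(A;P)$ to be associative: (i) $(ab)*f = a*(b*f)$; (ii) $(a*f)*b = a*(f*b)$; and (iii) $f*(ab) = (f*a)*b$. Identity (i) is immediate: both sides evaluated at $a_{1}$ equal $(ab)f(a_{1}) = a(bf(a_{1}))$ by the left module structure of $P$ over $A$. Identity (ii) reduces on both sides to the map $a_{1} \mapsto a f(ba_{1}) - a f(b) a_{1}$, using the two-sided module compatibility $(a f(b)) a_{1} = a (f(b) a_{1})$ in $P$.

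The principal computation, and the step most likely to require care, is (iii), because it involves a four-term expansion in which two terms must cancel. Expanding the right side,
\begin{equation*}
((f*a)*b)(a_{1}) = (f*a)(ba_{1}) - (f*a)(b) a_{1} = f(a(ba_{1})) - f(a)(ba_{1}) - f(ab) a_{1} + (f(a) b) a_{1}.
\end{equation*}
The associativity of $A$ gives $a(ba_{1}) = (ab)a_{1}$, and the right module associativity of $P$ gives $f(a)(ba_{1}) = (f(a)b)a_{1}$, so the middle two terms cancel, leaving $f((ab)a_{1}) - f(ab)a_{1} = (f*(ab))(a_{1})$. No sign factors intervene, since the associative-case module axioms of Section \ref{rbsec4} are sign-free; the $\mathbb{Z}_{2}$-grading only records the degrees of the outputs.
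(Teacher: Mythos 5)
Your proof is correct and follows the same route the paper intends: the paper's proof is just the one-line assertion that the claim follows from the two actions \eqref{rbSLM1}, \eqref{rbSLM2} and the definition of (two sided) supermodule, i.e.\ exactly the verification that the semidirect product $A\oplus C^{1}(A;P)$ is an associative superalgebra, which you carry out explicitly (grading of $a*f$ and $f*a$, plus the three mixed associativity identities). Your observation that identity (iii) is the only place where a cancellation, via associativity of $A$ and the bimodule relation in $P$, is needed is precisely the content the paper leaves to the reader.
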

\begin{proof}
  Proof is a direct consequence of the two actions of $A$ on $ C^{1}(A;P)$  given by relations \ref{rbSLM1}, \ref{rbSLM2} and the   definition of supermodule.
\end{proof}
We define a  $K$-linear map $\delta^n:C^{n}(A;P)\to C^{n+1}(A;P)$  by
\begin{eqnarray}\label{rbSAS8}
\delta^n f(x_1,\cdots, x_{n+1})&=&(-1)^{x_1f}x_1.f(x_2,\cdots,x_{n+1})\nonumber\\
      &&+ \sum_{i=1}^{n}(-1)^if(x_1,\cdots,x_i.x_{i+1},\cdots,x_{n+1}) \nonumber\\
      &&+(-1)^{n+1}f(x_1,\cdots,\cdots,x_{n}).x_{n+1},
  \end{eqnarray}  for all $f$ in  $C^{n}(A;P)$, $n\ge 1$, and $\delta^0f(x_1)=(-1)^{x_1f}x_1.f-f.x_1$, for all $f$ in $C^0(A;P)=P$. Clearly, for each
  $f \in C^{n}(A;P)$, $n\ge 0,$  $\deg(\delta f)=\deg(f).$
 For each $f\in C^{n}(A;P)$, $n>0$, we define $f_{n-1}\in C^{n-1}(A;C^1(A;P))$,  by
$$ f_{n-1}(a_1,\cdots,a_{n-1})(a_{n})=f(a_1,\cdots,a_n).$$

\begin{thm}
$\delta\delta=0$, that is, $(C^{\ast}(A;P),\delta)$ is a cochain complex.
  \end{thm}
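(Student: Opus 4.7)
The plan is to verify $\delta^{n+1}(\delta^n f)(x_1, \ldots, x_{n+2}) = 0$ by expanding both applications of $\delta$ directly, in the spirit of Hochschild's original calculation \cite{MR11076} and Gerstenhaber's \cite{MR161898}, and organizing the resulting terms into cancelling pairs. Write the coboundary \ref{rbSAS8} schematically as $\delta = L + \sum_i M_i + R$, where $L$ is the left-boundary summand (with Koszul sign $(-1)^{x_1 f}$), $M_i$ is the middle-multiplication summand at position $i$ (with sign $(-1)^i$), and $R$ is the right-boundary summand (with sign $(-1)^{n+1}$). Since $\deg(\delta f) = \deg(f)$ is immediate from \ref{rbSAS8}, the composite $\delta \circ \delta$ expands into nine classes of summands, and I will check cancellation class by class.

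The pairings are the classical ones, adapted to the super setting: $L \circ L$ cancels against the outer $M_1$ applied to the inner $L$, using associativity of the left action of $A$ on $P$ (equivalently, associativity of $A \oplus P$); $R \circ R$ cancels against the outer $M_{n+1}$ applied to the inner $R$, using associativity of the right action; $L \circ R$ cancels against $R \circ L$ using the two-sided module relation $x_1(p \cdot x_{n+2}) = (x_1 \cdot p) x_{n+2}$; each outer-$L$/inner-$M_j$ term ($j \ge 1$) cancels with the outer-$M_{j+1}$/inner-$L$ term after a unit shift of the middle index, and symmetrically on the right; and the pure middle terms $M_i \circ M_j$ split into two subfamilies, namely the non-adjacent case (where $M_i \circ M_j$ pairs with $M_{j-1} \circ M_i$ for $j > i+1$ by associativity of the multiplications inside $f$'s argument list) and the adjacent case $j = i+1$, in which three consecutive arguments $x_i, x_{i+1}, x_{i+2}$ collide into a triple product $x_i x_{i+1} x_{i+2}$ and the two orderings cancel by associativity of the multiplication in $A$ itself.

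The main obstacle, and the only genuinely new content beyond the classical Hochschild calculation, is verifying that the Koszul signs align in every pair. In each $L$-term the sign $(-1)^{x_1 g}$ (where $g$ is whatever inner cochain appears) must be tracked through the composition; for instance, in the $L \circ L$ versus $M_1 \circ L$ pairing one compares $(-1)^{x_1 f + x_2 f}$ with $(-1)^{1 + (x_1 + x_2) f}$, which differ by $(-1)$ as required, and in the $L \circ R$ versus $R \circ L$ pairing one compares $(-1)^{x_1 f + n + 1}$ with $(-1)^{n + 2 + x_1 f}$, again differing by $(-1)$. Once a uniform bookkeeping of each summand's sign as a product of a positional factor and a Koszul factor involving $\deg(x_1)\deg(f)$ is in place, all cancellations follow by the same combinatorics as in the ungraded case, and the only structural inputs needed are associativity of $A$ together with the two-sidedness of $P$.
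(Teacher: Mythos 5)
Your proposal is correct, but it takes a genuinely different route from the paper. You expand $\delta\delta f$ directly and cancel the resulting families of terms in pairs, the only super-specific work being the Koszul factor $(-1)^{x_1 f}$, which survives the composition unchanged because $\deg(\delta f)=\deg f$; your sample sign checks ($L\circ L$ against $M_1\circ L$, and $L\circ R$ against $R\circ L$) are accurate, and the structural inputs you cite are exactly what the remaining pairings need, namely associativity of $A$ and the two-sided supermodule identities $x\cdot(y\cdot m)=(xy)\cdot m$, $(m\cdot y)\cdot z=m\cdot(yz)$, $x\cdot(m\cdot y)=(x\cdot m)\cdot y$. The paper instead follows Hochschild's dimension-shifting argument: it verifies $\delta\delta f=0$ by hand only for $f\in C^0(A;P)$, makes $C^1(A;P)$ a two-sided supermodule via the actions \ref{rbSLM1} and \ref{rbSLM2}, establishes the identity $(\delta f)_n=\delta(f_{n-1})$ for the reduction map $f\mapsto f_{n-1}$, and then inducts on $n$, with the induction hypothesis taken over arbitrary supermodules $P$. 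The trade-off is this: your computation is self-contained and makes every sign explicit, but to be complete you must actually display all the cancelling families (your outline samples only two; the middle-middle adjacent/non-adjacent split and the right-hand analogues of the left-hand pairings still need to be written out, even though they are routine); the paper's route hides that combinatorics in the degree-zero base case and in the identity $(\delta f)_n=\delta(f_{n-1})$, and it then reuses that same identity to obtain the isomorphism $H_i^n(A;P)\cong H_i^{n-1}(A;C^1(A;P))$, a by-product your direct approach does not provide.
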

  \begin{proof}
    For $f\in C^{0}(A;P),$ we have
    \begin{eqnarray*}
\delta\delta f(x_1,x_2) &=& (-1)^{x_1f}x_1\delta f(x_2)-\delta f(x_1.x_2)+\delta f(x_1).x_2\\
                        &=& (-1)^{x_1f}x_1.((-1)^{x_2f}x_2.f -f.x_2)-(-1)^{x_2f+x_1f}(x_1.x_2).f\nonumber\\
                        &&+f.(x_1.x_2)+(-1)^{x_1f}(x_1.f).x_2- (f.x_1).x_2\\
                        &=&0.
    \end{eqnarray*}
For $f\in C^{n}(A;P)$, $n\ge 1$, we have
\begin{eqnarray}
(\delta f)_{n}(x_1,\cdots, x_{n})(x_{n+1})&=&\delta f(x_1,\cdots, x_{n+1})\nonumber\\
 &=&(-1)^{x_1f}x_1.f(x_2,\cdots,x_{n+1})\nonumber\\
     &&+\sum_{i=1}^{n}(-1)^if(x_1,\cdots,x_i.x_{i+1},\cdots,x_{n+1})\nonumber \\
     &&+(-1)^{n+1}f(x_1,\cdots,x_{n}).x_{n+1}\nonumber\\
     &=&(-1)^{x_1f}(x_1*f_{n-1}(x_2,\cdots,x_{n})(x_{n+1})\nonumber\\
     &&+ \sum_{i=1}^{n-1}(-1)^if_{n-1}(x_1,\cdots,x_i.x_{i+1},\cdots,x_{n})(x_{n+1})\nonumber \\
     &&+(-1)^{n}(f_{n-1}(x_1,\cdots,x_{n-1})*x_{n})x_{n+1}\nonumber\\
     &=&\delta f_{n-1}(x_1,\cdots,x_{n})(x_{n+1})
\end{eqnarray}
Thus $(\delta f)_{n}=\delta( f_{n-1})$. This implies that for all $f\in C^{n}(A;P)$, $n \ge 1,$
\begin{equation}\label{AssSup1}
(\delta\delta f)_{n+1}=\delta ((\delta f)_{n})=\delta \delta( f_{n-1})
\end{equation}
Assume that $\delta\delta f=0$ holds, for all $f\in C^{q}(A;P)$, where $P$ is an arbitrary supermodule over $A$, $0\le q\le n$. Using Equation \ref{AssSup1}, for  $f\in C^{n+1}(A;P)$ we have $\delta \delta (f_{n})=(\delta\delta f)_{n+2}$.  By induction hypothesis $\delta \delta (f_{n})=0.$ This implies that $(\delta\delta f)_{n+2}=0.$ Since $f=0$ if and only if $f_{n-1}=0,$ for all $f\in C^{n}(A;P)$, $n\ge 1$, we conclude that $\delta \delta f=0.$ So, by using mathematical induction we conclude that $\delta\delta=0.$
\end{proof}
We denote $\ker(\delta^n)$ by $Z^n(A;P)$ and image of $\delta^{n-1}$ by $B^n(A;P)$.
We call  the $n$-th  cohomology $Z^n(A;P)/B^n(A;P)$ of the cochain complex $(C^{*}(A;P),\delta)$ as the  $n$-th  cohomology of $A$ with coefficients in $P$ and  denote it by $H^{n}(A;P)$. Since $A$ is a supermodule over itself. So we can consider  cohomology groups $H^{n}(A;A)$. We call $H^{n}(A;A)$ as the $n$-th  cohomology group of $A$.
We  have $$Z^n(A;P)=Z_0^n(A;P)\oplus Z_1^n(A;P),  \;B^n(A;P)=B_0^n(A;P)\oplus B_1^n(A;P),$$  $Z_i^n(A;P)=\{f\in Z^n(A;P): \deg(f)=i \}$, $B_i^n(A;P)=\{f\in B^n(A;P): \deg(f)=i\}$ are vector subspaces of  $Z^n(A;P)$ and $B^n(A;P)$,respectively,  $i=0,1$. Since boundary map  $\delta^n:C^{n}(A;P)\to C^{n+1}(A;P)$ is homogeneous of degree $0$, we conclude that $H^{n}(A;P)$ is $\mathbb{Z}_2$-graded and $$H^{n}(A;P) \cong H_0^{n}(A;P)\oplus H_1^{n}(A;P),$$ where $H_i^{n}(A;P)= Z_i^n(A;P)/B_i^n(A;P)$, $i=0,1$.
\begin{thm}
  For $n\ge 2,$ $H_i^n(A;P)\cong H_i^{n-1}(A;C^1(A;P))$, $i=0,1$.
\end{thm}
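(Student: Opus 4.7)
The plan is to exhibit, for each $n \ge 2$, a degree-preserving cochain isomorphism
\[
\varphi_n : C^n(A;P) \longrightarrow C^{n-1}(A;C^1(A;P)), \qquad \varphi_n(f) = f_{n-1},
\]
where $f_{n-1}$ is the ``currying'' already defined by $f_{n-1}(a_1,\ldots,a_{n-1})(a_n) = f(a_1,\ldots,a_n)$, and then pass to cohomology. The main point is that nearly all of the required ingredients have already been assembled in the preceding material; the theorem is essentially a packaging statement.

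The first step is to verify that $\varphi_n$ is a $K$-linear bijection that respects the $\mathbb{Z}_2$-grading. Linearity is immediate from the definition. For bijectivity, given any homogeneous $g \in C^{n-1}(A;C^1(A;P))$, the assignment $(a_1,\ldots,a_n)\mapsto g(a_1,\ldots,a_{n-1})(a_n)$ is an $n$-linear map $A^{\times n}\to P$ whose currying is $g$, so $\varphi_n$ has a two-sided inverse. For the grading, a short degree count shows that if $\deg(f) = i$, then for homogeneous $a_1,\ldots,a_{n-1}$ the map $f_{n-1}(a_1,\ldots,a_{n-1})\in C^1(A;P)$ is homogeneous of degree $i + \sum_{j<n}\deg(a_j)$, and hence $\deg(f_{n-1}) = i$ as an element of $C^{n-1}(A;C^1(A;P))$. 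Thus $\varphi_n$ restricts to a bijection $C_i^n(A;P) \to C_i^{n-1}(A;C^1(A;P))$ for $i=0,1$.

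The second step is to show that $\varphi_n$ commutes with the coboundary. But this is precisely the identity
\[
(\delta f)_n = \delta(f_{n-1})
\]
established during the proof of $\delta\delta=0$; by definition it reads $\varphi_{n+1}(\delta f) = \delta(\varphi_n(f))$. Here one must use crucially that the left and right $A$-actions on $C^1(A;P)$ are exactly the ones given by \eqref{rbSLM1} and \eqref{rbSLM2}, since those are the actions with respect to which $\delta$ is computed on the target complex; this match is what makes the first and last terms of $\delta f$ line up with the first and last terms of $\delta f_{n-1}$ after currying.

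From these two facts the theorem is immediate: $\varphi_n$ carries $Z^n_i(A;P)$ isomorphically onto $Z^{n-1}_i(A;C^1(A;P))$ and $B^n_i(A;P)$ onto $B^{n-1}_i(A;C^1(A;P))$, so it induces a $K$-linear isomorphism $H^n_i(A;P) \cong H^{n-1}_i(A;C^1(A;P))$ for $i=0,1$. The only potential obstacle is bookkeeping: one must carefully track the two simultaneous gradings (the $\mathbb{Z}_2$-parity and the cohomological dimension) and the sign conventions in \eqref{rbSAS8} when rewriting $\delta f$ as the currying of $\delta f_{n-1}$; beyond that the proof is mechanical.
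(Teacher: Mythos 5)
Your proposal is correct and follows essentially the same route as the paper: the paper also uses the currying map $f\mapsto f_{n-1}$, notes it is a degree-preserving isomorphism of cochain spaces, and invokes the identity $(\delta f)_n=\delta(f_{n-1})$ (proved in the course of showing $\delta\delta=0$, with the module structure on $C^1(A;P)$ given by \eqref{rbSLM1} and \eqref{rbSLM2}) to carry cocycles and coboundaries onto cocycles and coboundaries, hence induce the isomorphism on $H^n_i$. Your write-up simply spells out the linearity, bijectivity, and parity bookkeeping that the paper dismisses as ``clearly.''
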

\begin{proof}
Clearly, for $i=0,1$, the mapping $f \mapsto f_{n-1}$  is an isomorphism from $C_i^n(A;P)$ onto $C_i^{n-1}(A;C^1(A;P))$. Since  $(\delta f)_n=\delta (f_{n-1})$,  $Z_i^n(A;P)$ and $B_i^n(A;P)$ are mapped by the mapping $f \mapsto f_{n-1}$ onto $Z_i^{n-1}(A;C^1(A;P))$ and $B_i^{n-1}(A;C^1(A;P)),$ respectively. Hence, for $i=0,1$, the mapping $f\mapsto f_{n-1}$ induces an isomorphism from $H_i^n(A;P)$ onto $H_i^{n-1}(A;C^1(A;P))$.
\end{proof}
\section{Cohomology of associative superalgebras in low dimensions}\label{rbsec7}
Let $A=A_0\oplus A_1$ be an associative superalgebra and $P=P_0\oplus P_1$ be a supermodule over $A.$ For any  $m\in C^0_i(A;P)=P_i$, $f\in C^1_i(A;P)$ and $g\in C^2_i(A;P)$, $i=0,1$
\begin{equation}\label{SLCOH1}
  \delta^0 m(x)=(-1)^{mx}x.m -m.x,
\end{equation}
\begin{equation}\label{SLCOH2}
 \delta^1 f(x_1,x_2)=-f(x_1.x_2)+ (-1)^{x_1f}x_1.f(x_2)+f(x_1).x_2,
\end{equation}
\begin{eqnarray}\label{SLCOH3}
   \delta^2 g(x_1,x_2,x_3)&=& -g(x_1.x_2,x_3)+g(x_1,x_2.x_3)\nonumber\\
   &&+(-1)^{x_1g}x_1.g(x_2,x_3)-g(x_1,x_2).x_3.
\end{eqnarray}
 We denote the  set $\{m\in P_i|m.x=(-1)^{mx}x.m, \text{for all homogeneous}\; x\in A\}$  by $Com_{P_i}A$.  We have \begin{eqnarray*}
          H_i^0(A;P) &=& \{m\in P_i|(-1)^{mx}x.m-m.x=0,\;\text{for all homogeneous}\; x\in A\} \\
          &=& Com_{P_i}A.
         \end{eqnarray*}
   For every $m\in P_i$ the map $x\mapsto (-1)^{ix}x.m- m.x$ is  a left inner derivation  of   $A$ into $P$ of degree $i$ and induced by $-m$. We denote the vector spaces of left derivations and  left inner derivations of degree $i$ of  $A$ into $P$ by $Der_i(A;P)$ and $Der_i^{Inn}(A;P)$ respectively. By using \ref{SLCOH1}, \ref{SLCOH2} we have $$H_i^1(A;P)=Der_i(A;P)/Der_i^{Inn}(A;P).$$
Let $A$ be an associative superalgebra and $P$ be a supermodule over $A.$ We regard $P$ as an associative superalgebra with  trivial product  $xy=0,$ for all $x,y\in P$.  We define extension of $A$ by $P$ of degree $0$ to be an exact sequence
\[\xymatrix{0\ar[r]& P\ar[r]^i &\mathcal{E}\ar[r]^\pi &A\ar[r] &0 }\tag{*}\]
of associative superalgebras such that $\deg(i)=0= \deg(\pi)$ and
\begin{equation}\label{SASrbeqn116}
  x.i(m)=\pi(x).m,\; i(m).x=m.\pi(x),
\end{equation}
for all homogeneous $x\in \mathcal{E}, $ $m\in P.$
 The exact sequence $(*)$ regarded  as a sequence of $K$-vector spaces  splits. Therefore without any loss of generality we may assume that  $\mathcal{E}$ as a  $K$-vector space coincides with the direct sum $A\oplus P$ and that $i(m)=(0,m),$ $\pi(x,m)=x.$ Hence relation \ref{SASrbeqn116} is meaningful. Thus we have  $\mathcal{E}=\mathcal{E}_0\oplus \mathcal{E}_1,$ where $\mathcal{E}_0=A_0\oplus P_0$, $\mathcal{E}_1=A_1\oplus P_1.$ The multiplication in $\mathcal{E}=A\oplus P$ has then necessarily the form $$ (0,m_1).(0,m_2)   = (0,0),\;  (x_1,0).(0,m_1) = (0,x_1.m_1),$$
 $$(0,m_2).(x_2,0) = (0,m_2.x_2),\; (x_1,0).(x_2,0) = (x_1.x_2,h(x_1,x_2)),$$  for some $h\in C_0^2(A;P)$, for all  homogeneous $x_1,x_2\in A$, $m_1,m_2\in P.$
Thus, in general, we have
 \begin{equation}\label{SLCOH7}
(x,m).(y,n)=(x.y,x.n+m.y+h(x,y)),
\end{equation}
for all  homogeneous $(x,m)$, $(y,n)$ in $\mathcal{E}=A\oplus P.$\\
 Conversely, let $h:A\times A\to P$ be a bilinear homogeneous map of degree $0$. For homogeneous $(x,m)$, $(y,n)$ in $\mathcal{E}=A\oplus P$ we define multiplication by Equation \ref{SLCOH7}.
For homogeneous $(x,m)$, $(y,n)$ and $(z,p)$ in $\mathcal{E}$ we have
\begin{eqnarray}\label{SLCOH4}
((x,m).(y,n)).(z,p)&=&((x.y).z,(x.y).p+(x.n).z+(m.y).z\nonumber\\
                      &&+h(x,y).z+h(x.y,z)
\end{eqnarray}
\begin{eqnarray}\label{SLCOH5}
(x,m).((y,n).(z,p))&=&(x.(y.z), x.(y.p)+x.(n.z)+m.(y.z)\nonumber\\
                     &&+x.h(y,z)+h(x,y.z)
\end{eqnarray}
 From Equations \ref{SLCOH4}, \ref{SLCOH5},  we conclude that $\mathcal{E}=A\oplus P$ is an associative superalgebra  with product given by Equation \ref{SLCOH7}  if and only if $\delta^2 h=0.$
 We denote the associative superalgebra given by Equation \ref{SLCOH7} using notation $\mathcal{E}_h$. Thus for every cocycle $h$ in  $C_0^2(A;P)$ there exists an extension
 \label{SLCOH8}
   \[E_h:\xymatrix{0\ar[r]& P\ar[r]^i &\mathcal{E}_h\ar[r]^\pi &A\ar[r] &0 }\]
of $A$ by $P$ of degree $0$, where $i$ and $\pi$ are inclusion and projection maps, that is, $i(m)=(0,m),$ $\pi(x,m)=x$.
 We say that two extensions
  \[\xymatrix{0\ar[r]& P\ar[r] &\mathcal{E}^i\ar[r] &A\ar[r] &0 } \;(i=1,2)\]
  of $A$ by $P$ of degree $0$ are equivalent if there is an associative superalgebra isomorphism $\psi:\mathcal{E}^1\to \mathcal{E}^2$ of degree $0$ such that following diagram commutes:
\[
\xymatrix{
  0 \ar[r] & P \ar[d]_{Id_P} \ar[r]^-{} & \mathcal{E}^1 \ar[d]_-{\psi} \ar[r]^-{} & A \ar[d]^-{Id_A} \ar[r] & 0 \\
  0 \ar[r] & P \ar[r]_-{} & \mathcal{E}^2 \ar[r]_-{} & A \ar[r] & 0
}
\tag{**}\]
We use $F(A,P)$ to denote the set of all equivalence classes of extensions of   $A$ by $P$ of degree $0$. Equation \ref{SLCOH7} defines a mapping of $Z_0^2(A;P)$
 onto $F(A,P)$. If for $h,h'\in Z_0^2(A;P)$ $E_h$ is equivalent to $E_{h'}$, then commutativity of diagram $(**)$ is equivalent to $$ \psi(x,m)=(x,m+f(x)),$$  for some $f\in C_0^1(A;P)$.
 We have
 \begin{eqnarray}
   \psi((x_1,m_1).(x_2,m_2)) &=& \psi(x_1.x_2,x_1.m_2+m_1.x_2+h(x_1,x_2))\nonumber \\
    &=& (x_1.x_2,x_1.m_2+m_1.x_2+h(x_1,x_2)+f(x_1.x_2)),\nonumber\\
 \end{eqnarray}
  \begin{eqnarray}
    \psi(x_1,m_1).\psi(x_2,m_2)&=& (x_1,m_1+f(x_1)).(x_2,m_2+f(x_2)) \nonumber\\
     &=& (x_1.x_2,x_1.(m_2+f(x_2))+(m_1+f(x_1)).x_2 +h'(x_1,x_2)).\nonumber\\
  \end{eqnarray}
 Since $\psi((x_1,m_1).(x_2,m_2))=\psi(x_1,m_1).\psi(x_2,m_2)$, we have
 \begin{eqnarray}
h(x_1,x_2)-h'(x_1,x_2) &=& - f(x_1.x_2)+x_1.f(x_2)+f(x_1).x_2\nonumber\\
    &=& \delta^1(f)(x_1,x_2)
 \end{eqnarray}
Thus  two extensions $E_h$ and $E_{h'}$ of degree $0$ are equivalent if and only if there exists some $f\in C_0^1(A;P)$ such that $\delta^1f = h-h'$. We thus have following theorem:
\begin{thm}
The set $F(A,P)$ of  all equivalence classes of  extensions of $A$ by $P$ of degree $0$ is in one to one correspondence with the cohomology group $H_0^2(A;P)$. This correspondence $\omega :H_0^2(A;P)\to F(A,P)$ is obtained  by assigning to each cocycle $h\in Z_0^2(A;P)$, the extension given by multiplication \ref{SLCOH7}.
\end{thm}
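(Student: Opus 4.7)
The plan is to define the map $\omega$ on cocycles first, then verify it descends to cohomology, and finally establish that the induced map on $H_0^2(A;P)$ is both surjective and injective using the algebraic preparation already carried out in the excerpt.

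First I would set $\omega([h]) = [E_h]$, where $E_h$ is the extension constructed from formula \ref{SLCOH7}. The preceding discussion already shows that for $h \in Z_0^2(A;P)$ the product defined by \ref{SLCOH7} is associative precisely because $\delta^2 h = 0$, so $E_h$ is an honest extension of $A$ by $P$ of degree $0$. To check that $\omega$ is well defined on cohomology classes, suppose $h' = h + \delta^1 f$ for some $f \in C_0^1(A;P)$; define $\psi:\mathcal{E}_h \to \mathcal{E}_{h'}$ by $\psi(x,m) = (x, m + f(x))$. This map is $K$-linear of degree $0$, fits into the commutative diagram $(**)$, and the computation displayed just before the theorem statement shows that $\psi$ is multiplicative; thus $E_h$ and $E_{h'}$ are equivalent and $\omega$ is well defined.

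Next I would address surjectivity. Given an arbitrary extension
\[
\xymatrix{0\ar[r]& P\ar[r]^i &\mathcal{E}\ar[r]^\pi &A\ar[r] &0 }
\]
of degree $0$, choose a $K$-linear homogeneous section $s:A\to \mathcal{E}$ of $\pi$ of degree $0$ (which exists since the sequence splits as graded vector spaces). Define $h:A\times A \to P$ by $i(h(x,y)) = s(x)s(y) - s(xy)$; the element on the right lies in $\ker\pi = i(P)$ because $\pi$ is a homomorphism, so $h$ is a well defined homogeneous bilinear map of degree $0$. Associativity of the product in $\mathcal{E}$ translated through $s$ together with condition \ref{SASrbeqn116} gives $\delta^2 h = 0$, and the explicit identification of $\mathcal{E}$ with $A\oplus P$ via $(x,m) \mapsto s(x) + i(m)$ shows that the product acquires exactly the form \ref{SLCOH7}. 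Hence the given extension is equivalent to $E_h$, proving surjectivity.

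For injectivity, suppose $\omega([h]) = \omega([h'])$, i.e.\ $E_h$ and $E_{h'}$ are equivalent via some isomorphism $\psi$ of degree $0$ making diagram $(**)$ commute. Commutativity of the two squares forces $\psi(0,m) = (0,m)$ and $\pi \circ \psi = \pi$, whence $\psi(x,m) = (x, m + f(x))$ for a unique $f \in C_0^1(A;P)$, as noted in the excerpt. The multiplicativity calculation just before the theorem then yields $h - h' = \delta^1 f$, so $[h] = [h']$ in $H_0^2(A;P)$. Combining well definedness, surjectivity, and injectivity establishes the bijection $\omega$. The main obstacle is the surjectivity step, since it requires producing a splitting and verifying that the resulting factor system is a cocycle satisfying the module compatibility \ref{SASrbeqn116}; everything else is essentially bookkeeping applied to the formulas already laid out.
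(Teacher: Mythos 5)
Your proposal is correct and follows essentially the same route as the paper, whose ``proof'' is the discussion preceding the theorem: splitting the sequence to identify $\mathcal{E}$ with $A\oplus P$, reading off the factor set $h$ with $\delta^2 h=0$, and showing that equivalence of $E_h$ and $E_{h'}$ amounts to $h-h'=\delta^1 f$. You merely repackage this as well-definedness, surjectivity (making the choice of section $s$ and the definition $i(h(x,y))=s(x)s(y)-s(xy)$ explicit, which the paper hides in its ``without loss of generality'' identification), and injectivity; apart from an immaterial sign convention in the well-definedness step ($h'=h+\delta^1 f$ pairs with $\psi(x,m)=(x,m-f(x))$), there is nothing to change.
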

\section{Cup product for $H^*(A;A)$}\label{rbsec10}
\begin{defn}
Let $P$ be a (two sided) supermodule over an associative superalgebra $A$. Also, assume that $P$ has an structure associative superalgebra, in particular we may take $P=A.$ We define a multiplication $\cup$ on $C^*(A;P)=\bigoplus C^n(A;P)$ by defining  $\cup: C^m(A;P)\times C^n(A;P)\to C^{m+n}(A;P)$  by
\begin{eqnarray}
     && f\cup g(a_1,\cdots,a_m,b_1,\cdots,b_n)\nonumber \\
     &=& (-1)^{\tilde{g}(a_1+\cdots+a_m)}f(a_1,\cdots,a_m)g(b_1,\cdots,b_n),
\end{eqnarray}
for all $f\in  C^{m}_{\tilde{f}}(A;P)$, $g\in  C^{n}_{\tilde{g}}(A;P)$, for all $m,n\ge 0$. If we put $C^n(A;P)=0$ for $n<0,$ then  $C^*(A;P)$ is a $\mathbb{Z}$-graded associative superalgebra with respect to the multiplication $\cup$.
\end{defn}
As a direct consequence of definitions of coboundary map $\delta$  \ref{rbSAS8} and $\cup$  we have following proposition:
\begin{prop}\label{rbSASprop7}
  For $f\in  C^{m}_{\tilde{f}}(A;P)$, $g\in  C^{n}_{\tilde{g}}(A;P)$,
\begin{equation}\label{rbSAS7}
    \delta(f\cup g)=\delta f\cup g+(-1)^mf\cup \delta g.
\end{equation}
\end{prop}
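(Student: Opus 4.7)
The plan is a direct expansion of both sides, evaluated on an $(m+n+1)$-tuple $(x_1,\ldots,x_{m+n+1})$, followed by term-by-term matching. Using \eqref{rbSAS8} applied to the $(m+n)$-cochain $f\cup g$ of degree $\tilde f+\tilde g$, the expression $\delta(f\cup g)(x_1,\ldots,x_{m+n+1})$ decomposes into a first-slot term $(-1)^{x_1(f+g)}\,x_1\cdot(f\cup g)(x_2,\ldots,x_{m+n+1})$, a last-slot term $(-1)^{m+n+1}(f\cup g)(x_1,\ldots,x_{m+n})\cdot x_{m+n+1}$, and a middle sum $\sum_{i=1}^{m+n}(-1)^i(f\cup g)(\ldots,x_ix_{i+1},\ldots)$. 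I split this middle sum into three ranges according to where $x_ix_{i+1}$ falls: inside the $f$-block ($1\le i\le m-1$), the straddling index $i=m$, and inside the $g$-block ($m+1\le i\le m+n$). On the right-hand side I expand $(\delta f\cup g)(x_1,\ldots,x_{m+n+1})$ and $(-1)^m(f\cup\delta g)(x_1,\ldots,x_{m+n+1})$ by plugging the formulas for $\delta f$ and $\delta g$ into the cup-product definition; this produces the Koszul signs $(-1)^{g(x_1+\cdots+x_{m+1})}$ and $(-1)^m(-1)^{g(x_1+\cdots+x_m)}$ respectively, each multiplying three kinds of terms coming from the three pieces of $\delta f$ and $\delta g$.

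The expected correspondence is then as follows. The first-slot term of $\delta(f\cup g)$ pairs with the first-slot contribution from $\delta f\cup g$, the exponents matching because $x_1(f+g)\equiv x_1f+x_1g\pmod{2}$. The interior terms with $1\le i\le m-1$ pair with the corresponding interior pieces of $\delta f$, using $\deg(x_ix_{i+1})\equiv x_i+x_{i+1}\pmod{2}$ to see that the cup-product Koszul sign is unchanged. The straddling term at $i=m$ pairs with the interior piece of $\delta f$ at its last interior slot. The interior terms with $m+1\le i\le m+n$ pair with the interior pieces of $\delta g$ inside $f\cup\delta g$, after reindexing $j=i-m$ and absorbing the prefactor $(-1)^m$. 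Finally, the last-slot term of $\delta(f\cup g)$ pairs with the last-slot contribution from $\delta g$ in $f\cup\delta g$.

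Two terms are left over: the last-slot piece of $\delta f$ inside $\delta f\cup g$, namely $(-1)^{m+1}(-1)^{g(x_1+\cdots+x_{m+1})}f(x_1,\ldots,x_m)\,x_{m+1}\cdot g(x_{m+2},\ldots,x_{m+n+1})$, and the first-slot piece of $\delta g$ inside $f\cup\delta g$, namely $(-1)^m(-1)^{g(x_1+\cdots+x_m)}(-1)^{x_{m+1}g}f(x_1,\ldots,x_m)\cdot x_{m+1}\,g(x_{m+2},\ldots,x_{m+n+1})$. Associativity of the product in $P$ and compatibility of the two-sided $A$-action on $P$ identify the two underlying products, and using $g(x_1+\cdots+x_{m+1})\equiv g(x_1+\cdots+x_m)+g\,x_{m+1}\pmod{2}$ the two sign exponents differ by exactly $1\pmod{2}$, so these contributions cancel. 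This cancellation is the crux of the proof, and the main obstacle is precisely the sign bookkeeping as the split point between the $f$- and $g$-argument blocks moves; once it is verified, the remaining matchings establish \eqref{rbSAS7}.
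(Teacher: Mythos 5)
Your proposal is correct and is exactly the direct verification that the paper leaves implicit when it asserts the identity is ``a direct consequence'' of the definitions of $\delta$ and $\cup$: the term-by-term matching, the treatment of the straddling index $i=m$, and the cancellation of the two junction terms (whose exponents differ by $1$ via $\tilde g(x_1+\cdots+x_{m+1})=\tilde g(x_1+\cdots+x_m)+\tilde g\,x_{m+1}$) all check out, granted the associativity/compatibility of the $A$-action with the product on $P$ (automatic for $P=A$), which the paper also tacitly assumes.
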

From Proposition \ref{rbSASprop7}, we conclude that $\delta$ is a derivation of $\{C^*(A;P),\cup\}$ of degree $(1,0)$. Hence the cochain complex $C^*(A;A)$ of an associative super algebra is a differential graded  associative superalgebra  with respect to the multiplication $\cup$.  Next, we have some observations as  following theorem:
\begin{thm}
  \begin{itemize}
    \item[(i)]  $\{Z^*(A;P),\cup\}$ is a subsuperalgebra of $\{C^*(A;P),\cup\}$, where $Z^*(A;P)=\bigoplus Z^n(A;P)$;
    \item[(ii)] If one of $f$ and $g$ is in  $  B^{m}(A;P)$ and other in  $  Z^{n}(A;P)$, then   $f\cup g\in  B^{m+n}(A;P)$, that is, $B^*(A;P)=\bigoplus B^{n}(A;P)$ is a (two sided) ideal of $Z^*(A;P)$;
    \item[(iii)] $H^*(A;P)=\bigoplus H^n(A;P)$ is a $\mathbb{Z}$-graded associative superalgebra with respect to multiplication $\cup$ (called as cup product) defined by $$(f+B^{m}(A;P))\cup (g+B^n(A;P))=(f\cup g)+B^{m+n}(A;P),$$  for all $f\in Z^{m}(A;P)$, $g\in Z^{n}(A;P)$.
  \end{itemize}
\end{thm}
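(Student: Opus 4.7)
The proof is an almost direct consequence of the graded Leibniz identity established in Proposition \ref{rbSASprop7}, namely $\delta(f\cup g)=\delta f\cup g+(-1)^m f\cup \delta g$ for $f\in C^m$, $g\in C^n$. The plan is to deduce (i), (ii), (iii) in order, since (iii) is forced once (i) and (ii) are in hand.

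For (i), I would take $f\in Z^m(A;P)$ and $g\in Z^n(A;P)$ and apply the Leibniz rule: both terms $\delta f\cup g$ and $(-1)^m f\cup \delta g$ vanish, so $\delta(f\cup g)=0$, i.e.\ $f\cup g\in Z^{m+n}(A;P)$. Closure under $\cup$ is the only thing to verify, since bilinearity, the grading $\cup\colon Z^m\times Z^n\to Z^{m+n}$, and associativity (which is just associativity of $\cup$ on $C^*$, already noted when the definition of $\cup$ was given) are inherited from the ambient $\mathbb{Z}$-graded associative superalgebra $\{C^*(A;P),\cup\}$.

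For (ii), I would treat the two cases symmetrically. If $f=\delta h$ for some $h\in C^{m-1}(A;P)$ and $g\in Z^n(A;P)$, the Leibniz rule gives
\begin{equation*}
\delta(h\cup g)=\delta h\cup g+(-1)^{m-1}h\cup \delta g=\delta h\cup g=f\cup g,
\end{equation*}
so $f\cup g\in B^{m+n}(A;P)$. Conversely, if $f\in Z^m(A;P)$ and $g=\delta h$ for some $h\in C^{n-1}(A;P)$, then
\begin{equation*}
\delta(f\cup h)=\delta f\cup h+(-1)^m f\cup \delta h=(-1)^m f\cup g,
\end{equation*}
and multiplying by $(-1)^m$ exhibits $f\cup g$ as a coboundary. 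Together these show $B^*(A;P)$ is a two-sided ideal in $Z^*(A;P)$.

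For (iii), once (i) and (ii) are established, the formula
\begin{equation*}
(f+B^m)\cup(g+B^n)=(f\cup g)+B^{m+n}
\end{equation*}
is well defined: any two representatives of the same coset differ by an element of $B^*$, and by (ii) their cup with a cocycle lies in $B^*$. Associativity, bilinearity, and the $\mathbb{Z}\times\mathbb{Z}_2$-grading descend verbatim from the corresponding properties on $C^*(A;P)$. The only genuine obstacle in the whole argument is the well-definedness in (iii), which is precisely the content of (ii); no separate computation is needed beyond the two applications of the Leibniz rule above.
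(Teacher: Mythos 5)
Your proof is correct and follows exactly the route the paper intends: the paper states this theorem without a separate proof, presenting it as an immediate consequence of the Leibniz rule of Proposition \ref{rbSASprop7}, which is precisely the argument you carry out (cocycles closed under $\cup$, coboundary-cup-cocycle and cocycle-cup-coboundary are coboundaries, hence the induced product on $H^*(A;P)$ is well defined and inherits associativity and the grading).
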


Let $A$ be an associative superalgebra and $P$ be a (two sided) supermodule over $A$. If we define two actions of $C^*(A;A)=\oplus C^n(A;A)$ on $C^*(A;P)=\oplus C^n(A;P)$ (we use same symbol $\cup$ for both the actions and differentiate them from context) by $$f\cup g(a_1,\cdots,a_m,b_1,\cdots,b_n )=(-1)^{\tilde{g}(a_1+\cdots+a_m)}f(a_1,\cdots,a_m)g(b_1,\cdots,b_n),$$
for all $f\in  C^{m}_{\tilde{f}}(A;A)$, $g\in  C^{n}_{\tilde{g}}(A;P)$  or $f\in  C^{m}_{\tilde{f}}(A;P)$, $g\in  C^{n}_{\tilde{g}}(A;A)$. With  these two actions $C^*(A;P)$ is a (two sided) supermodule over $C^*(A;A)$

It is clear that Equation $\ref{rbSAS7}$ holds also when either $f\in C^m(A;P)$, $g\in C^n(A;A)$ or $f\in C^m(A;A)$, $g\in C^n(A;P)$. This implies that $Z^m(A;A)\cup Z^n(A;P)$ and $Z^m(A;P)\cup Z^n(A;A)$ are contained in $Z^{m+n}(A;P)$, and $Z^m(A;A)\cup B^n(A;P)$ and $B^m(A;A)\cup Z^n(A;P)$ are contained in $B^{m+n}(A;P)$. Hence we conclude that $H^*(A;P)$ is a (two sided) supermodule over $\{H^*(A;A),\cup\}$.
\section{Bracket product for $H^*(A;A)$}\label{rbsec8}
\begin{defn}
A right pre-Lie super system $\{V_m,\circ_i\}$ is defined as a sequence $$\cdots,V_{-1},V_0,V_1,V_2\cdots,V_n,\cdots$$ of $\mathbb{Z}_2$-graded $K$-vector spaces $V_m=V^m_0\oplus V^m_1$ together with an assignment for every triple of integers $m,n,i\ge 0 $ with $i\le m$, of a homogeneous $K$-bilinear map $\circ_i=\circ_i(m,n)$ of $V_m\times V_n$ into $V_{m+n}$ of degree $ 0$ such that following conditions are satisfied:

  \begin{equation}\label{LS10}
     (f\circ_ig)\circ_jh=
     \begin{cases}
       (-1)^{\tilde{g} \tilde{h}}(f\circ_jh)\circ_{i+p}g, & \mbox{if } 0\le j\le i-1 \\
        f\circ_i(g\circ_{j-i}h), & \mbox{if } i\le j\le n+i
     \end{cases}
\end{equation}
for all  $f\in V^m_{\tilde{f}},$ $g\in V^n_{\tilde{g}}$ and $h\in V^p_{\tilde{h}}$. Here $\circ_i(f, g)=f\circ_ig$.
From the first case of Equation \ref{LS10}, we have
\begin{equation}\label{LS11}
(f\circ_jh)\circ_{i+p}g= (-1)^{\tilde{g} \tilde{h}}(f\circ_{i+p}g)\circ_{j+n}h,\;\text{if}\; 0\le i+p\le j-1
\end{equation}
\end{defn}
In  Equation \ref{LS11}, replacing $i+p$ by $i$ and $j+n$ by $j$ we have
\begin{equation}\label{LS12}
(f\circ_ig)\circ_jh=(-1)^{\tilde{g} \tilde{h}}(f\circ_{j-n}h)\circ_{i}g,\;\text{if}  n+i+1\le j\le m+n,
\end{equation}
\begin{exmpl}
  Let $A=A_0\oplus A_1$ be an associative superalgebra. Put $V_0=A$, and $V_m=0,$ for all $m\in \mathbb{Z}$ with $m\ne 0.$  We define $\circ_i=\circ_i(m,n):V_m\times V_n\to V_{m+n}$ by $$a\circ_i b= \begin{cases}
       ab, & \mbox{if } i=m=n=0 \\
       0, & \mbox{otherwise}.
     \end{cases}$$
Clearly, $\{V_m,\circ_i\}$ is a right pre-Lie supersystem.
\end{exmpl}
\begin{exmpl}\label{SASrbe1}
  Let $U=U_0\oplus V_1$, $W=W_0\oplus W_1$ $\mathbb{Z}_2$-graded $K$-vector spaces. Let   $\phi :W\to U$ be a homogeneous $K$-linear map of degree $0$.  Put  $$V_m=\begin{cases}
    C^{m+1}(U;W), & \mbox{if } m\in\mathbb{Z}, m\ge -1 \\
    0, & \mbox{otherwise}.
  \end{cases}$$ For integers $m,n,i\ge 0,$  $i\le m$ we define $\circ_i:V_m\times V_n\to V_{m+n}$ by
  \begin{eqnarray}
   && f\circ_ig(x_1,\cdots,x_i,y_1,\cdots,y_{n+1},x_{i+2},\cdots, x_{m+1}) \nonumber \\
   &=& (-1)^{\tilde{g}(x_1+\cdots+x_i)} f(x_1,\cdots,x_i,\phi g(y_1,\cdots,y_{n+1}), x_{i+2},\cdots, x_{m+1}),
  \end{eqnarray}
for all $f\in V^m_{\tilde{f}} =C_{\tilde{f}}^{m+1}(A;P),$ $g\in V^n_{\tilde{g}}=C_{\tilde{g}}^{n+1}(A;P)$.
The definition of $\circ_i$ can be extended to the case when $n=-1$ as follows
   \begin{eqnarray}
   && f\circ_ig(x_1,\cdots,x_i,x_{i+2},\cdots, x_{m+1}) \nonumber \\
     &=&(-1)^{\tilde{g}(x_1+\cdots+x_i)} f(x_1,\cdots,x_i,\phi g, x_{i+2},\cdots, x_{m+1})
  \end{eqnarray}
  One can readily verify that the condition \ref{LS10} holds and $\{V_m,\circ_i\}$ is a right pre-Lie supersystem. .
\end{exmpl}
\begin{defn}\label{rbSASdef1}
  Let $(V_m,\circ_i)$ be a pre-Lie supersystem. For every $m$ and $n$ we define a homogeneous $K$-bilinear  map  $\circ:V_m\times V_n\to V_{m+n}$ of degree $(0,0)$ by
  \begin{equation}\label{rbSAS1}
  f\circ g= \begin{cases}
   \sum_{i=0}^{m} (-1)^{ni}f\circ_i g , & \mbox{if } m\ge 0 \\
    0, & \mbox{if } m<0,
  \end{cases}
\end{equation}
 for all $f\in V_m$, $g\in V_n.$
\end{defn}
\begin{thm}\label{SASrbthm1}
  Let $(V_m,\circ_i)$ be a right pre-Lie supersystem. Then for $f\in V^m_{\tilde{f}}$, $g\in V^n_{\tilde{g}}$, $h\in V^p_{\tilde{h}}$
  \begin{itemize}
    \item[(a)] \[(f\circ g)\circ h-f\circ (g\circ h)=\sum_{\substack{ 0\le j\le i-1\\ n+i+1\le j\le m+n}}(-1)^{ni+pj}(f\circ_i g)\circ_jh\]
    \item[(b)] $(f\circ g)\circ h-f\circ(g\circ h)=(-1)^{np+\tilde{g}\tilde{h}}((f\circ h)\circ g-f\circ(h\circ g)).$
  \end{itemize}
\end{thm}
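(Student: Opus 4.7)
\textbf{Proof plan for Theorem \ref{SASrbthm1}.}

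My plan is to expand both sides of (a) using Definition \ref{rbSASdef1} and then use the two cases of axiom \eqref{LS10} to carry out the cancellation. Concretely, I would write
\[
(f\circ g)\circ h \;=\; \sum_{i=0}^{m}\sum_{j=0}^{m+n}(-1)^{ni+pj}(f\circ_i g)\circ_j h,
\qquad
f\circ(g\circ h) \;=\; \sum_{k=0}^{m}\sum_{l=0}^{n}(-1)^{(n+p)k+pl}\, f\circ_k(g\circ_l h),
\]
and then split the double sum for $(f\circ g)\circ h$ into the ``middle block'' $\{(i,j):i\le j\le n+i\}$ and its complement in $\{0,\dots,m\}\times\{0,\dots,m+n\}$. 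On the middle block, the second case of \eqref{LS10} gives $(f\circ_i g)\circ_j h=f\circ_i(g\circ_{j-i}h)$; the change of variable $k=i$, $l=j-i$ identifies the sign $(-1)^{ni+pj}=(-1)^{(n+p)i+pl}$ and reproduces $f\circ(g\circ h)$ exactly. What is left is precisely the sum over the complement, which is the right-hand side of (a).

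For part (b), I would not re-expand anything, but rather set up a sign-preserving involution on the index set $A$ of (a) using the two ``outer'' rewrites coming from \eqref{LS11} and \eqref{LS12}. Split $A$ into $A_1=\{(i,j):0\le j\le i-1\}$ and $A_2=\{(i,j):n+i+1\le j\le m+n\}$. On $A_1$, apply the first case of \eqref{LS10} to obtain
\[
(f\circ_i g)\circ_j h=(-1)^{\tilde g\tilde h}(f\circ_j h)\circ_{i+p}g,
\]
and send $(i,j)\mapsto(i',j'):=(j,i+p)$; on $A_2$, apply \eqref{LS12} to obtain
\[
(f\circ_i g)\circ_j h=(-1)^{\tilde g\tilde h}(f\circ_{j-n}h)\circ_i g,
\]
and send $(i,j)\mapsto(i',j'):=(j-n,i)$. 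One checks that the first map is a bijection from $A_1$ onto $\{(i',j'):p+i'+1\le j'\le m+p\}$ and the second from $A_2$ onto $\{(i',j'):0\le j'\le i'-1\}$; together they give a bijection from $A$ onto the analogous index set $A'$ obtained from (a) with the roles of $g$ and $h$ (and of $n,p$) swapped.

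The sign bookkeeping is the only delicate point. In both cases the rewritten sign $(-1)^{ni+pj}$ becomes $(-1)^{\tilde g\tilde h}(-1)^{ni+pj}$ multiplying $(f\circ_{i'}h)\circ_{j'}g$, and a direct check shows $(-1)^{ni+pj}=(-1)^{np}(-1)^{pi'+nj'}$ under both substitutions. Summing the two contributions then yields
\[
\sum_{(i,j)\in A}(-1)^{ni+pj}(f\circ_i g)\circ_j h
=(-1)^{np+\tilde g\tilde h}\sum_{(i',j')\in A'}(-1)^{pi'+nj'}(f\circ_{i'}h)\circ_{j'}g,
\]
and applying (a) to both sides gives (b). The main obstacle I anticipate is purely combinatorial: verifying that the two substitutions partition $A$ and $A'$ correctly and that the sign $(-1)^{np}$ emerges consistently from both pieces; once these are in place, everything reduces to the two cases of the pre-Lie supersystem axiom.
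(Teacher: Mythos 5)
Your proposal is correct and follows essentially the same route as the paper: part (a) is the identical expansion and middle-block cancellation via the second case of \ref{LS10}, and part (b) uses the same two rewrites (the first case of \ref{LS10} on $0\le j\le i-1$ and \ref{LS12} on $n+i+1\le j\le m+n$) followed by reindexing, your maps $(i,j)\mapsto(j,i+p)$ and $(i,j)\mapsto(j-n,i)$ being exactly the paper's substitutions $j=\lambda$, $i+p=\mu$ and $j-n=\lambda$, $i=\mu$, with the same sign bookkeeping producing $(-1)^{np+\tilde g\tilde h}$.
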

\begin{proof}
  By using Definitions \ref{rbSAS1}, \ref{LS10}
  \begin{eqnarray}\label{rbSAS2}
     && (f\circ g)\circ h- f\circ (g\circ h)\nonumber\\
     &=& \sum_{\substack{0\le j\le m+n\\0\le i\le m}}(-1)^{pj+ni}(f\circ_i g)\circ_j h- \sum_{\substack{0\le \lambda\le m\\0\le \mu\le n}}(-1)^{\lambda(n+p)+\mu p}f\circ_{\lambda} (g\circ_{\mu}h )\nonumber\\
     &=&   \sum_{\substack{0\le i\le m \\0\le j\le i-1\\n+i+1\le j\le m+n}}(-1)^{pj+ni}(f\circ_i g)\circ_j h+ \sum_{\substack{0\le i\le m \\i\le j\le n+i}}(-1)^{pj+ni}(f\circ_i g)\circ_j h\nonumber\\
     && -\sum_{\substack{0\le \lambda\le m\\0\le \mu\le n}}(-1)^{\lambda(n+p)+\mu p}f\circ_{\lambda} (g\circ_{\mu}h )\nonumber\\
      &=&   \sum_{\substack{0\le i\le m \\0\le j\le i-1\\n+i+1\le j\le m+n}}(-1)^{pj+ni}(f\circ_i g)\circ_j h+ \sum_{\substack{0\le i\le m \\i\le j\le n+i}}(-1)^{pj+ni}f\circ_i (g\circ_{j-i} h)\nonumber\\
     && -\sum_{\substack{0\le \lambda\le m\\0\le \mu\le n}}(-1)^{\lambda(n+p)+\mu p}f\circ_{\lambda} (g\circ_{\mu}h )\nonumber\\
     &=& \sum_{\substack{0\le i\le m \\0\le j\le i-1\\n+i+1\le j\le m+n}}(-1)^{pj+ni}(f\circ_i g)\circ_j h
  \end{eqnarray}
  From Equation \ref{rbSAS2},
  \begin{eqnarray}\label{rbSAS3}
     && (f\circ g)\circ h- f\circ (g\circ h)\nonumber\\
     &=& \sum_{\substack{0\le i\le m \\0\le j\le i-1\\n+i+1\le j\le m+n}}(-1)^{pj+ni}(f\circ_i g)\circ_j h \nonumber\\
     &=& (-1)^{pj+ni+\tilde{g}\tilde{h}}\Bigg\{\sum_{\substack{0\le i\le m \\0\le j\le i-1}}(f\circ_j h)\circ_{i+p} g +\sum_{\substack{0\le i\le m \\n+i+1\le j\le m+n}}(f\circ_{j-n} h)\circ_i g\Bigg \}\nonumber
\end{eqnarray}
Putting  $j=\lambda$, $i+p=\mu$ in the first sum  and $j-n=\lambda$, $i=\mu$ in the second sum of Equation \ref{rbSAS3}, we get
\begin{eqnarray}\label{rbSAS4}
   && (f\circ g)\circ h- f\circ (g\circ h)\nonumber \\
  &=& \sum_{\substack{0\le \lambda\le m \\\lambda+p+1\le\mu\le m+p}}(-1)^{p\lambda+n\mu+np+\tilde{g}\tilde{h}}(f\circ_\lambda h)\circ_\mu g\nonumber\\
  &&+\sum_{\substack{0\le \lambda\le m \\0\le \mu\le \lambda-1}}(-1)^{p\lambda+pn+n\mu+\tilde{g}\tilde{h}}(f\circ_\lambda h)\circ_\mu g\nonumber \\
  &=& (-1)^{np+gh}((f\circ h)\circ g- f\circ (h\circ g))
\end{eqnarray}
\end{proof}
\begin{cor}\label{SASrbcr1}
Let $\{ V_m, \circ_i\}$ be a right pre-Lie supersystem and let $V =\bigoplus V_m$ be the direct sum of the $\mathbb{Z}_2$-graded $K$-vector spaces $V_m=V^m_0\oplus V^m_1$.  Then with respect to multiplication given by $\circ$  in  Definition \ref{rbSASdef1}  $V$ is a $\mathbb{Z}$-graded right pre-Lie superalgebra.
\end{cor}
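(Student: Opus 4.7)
The plan is to derive the corollary essentially for free from Theorem \ref{SASrbthm1}(b), after a short verification that $(V, \circ)$ has the right grading structure to even be a candidate $\mathbb{Z}$-graded superalgebra.

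First I would check that $V = \bigoplus_{m} V_m$, equipped with $\circ$, is indeed a $\mathbb{Z} \times \mathbb{Z}_2$-graded algebra. Since each $\circ_i = \circ_i(m,n) : V_m \times V_n \to V_{m+n}$ is a homogeneous bilinear map of bidegree $(0,0)$, the sum $f \circ g = \sum_i (-1)^{ni} f \circ_i g$ from Definition \ref{rbSASdef1} also sends $V^m_{\tilde f} \times V^n_{\tilde g}$ into $V^{m+n}_{\tilde f + \tilde g}$. Extending $\circ$ bilinearly to $V \times V$ (using $f \circ g = 0$ when $m < 0$) yields a well-defined bilinear multiplication respecting both the $\mathbb{Z}$-grading (by integer index $m$) and the $\mathbb{Z}_2$-grading (by $\tilde f$). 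Hence $V$ is a $\mathbb{Z}$-graded superalgebra in the sense of the paper.

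Next, I would observe that the right pre-Lie identity \eqref{SASpreAlgb} to be established on $V$, namely
\begin{equation*}
(c \circ a) \circ b - (-1)^{\alpha_1 \alpha_2 + \beta_1 \beta_2}(c \circ b) \circ a
= c \circ (a \circ b) - (-1)^{\alpha_1 \alpha_2 + \beta_1 \beta_2}\, c \circ (b \circ a),
\end{equation*}
for $a \in V^{\alpha_1}_{\beta_1}$, $b \in V^{\alpha_2}_{\beta_2}$, $c \in V^{\alpha_3}_{\beta_3}$, is precisely a rearrangement of Theorem \ref{SASrbthm1}(b). Relabeling $(f,g,h) = (c,a,b)$ in that theorem gives $m = \alpha_3$, $n = \alpha_1$, $p = \alpha_2$, $\tilde g = \beta_1$, $\tilde h = \beta_2$, so the theorem reads
\begin{equation*}
(c \circ a) \circ b - c \circ (a \circ b) = (-1)^{\alpha_1 \alpha_2 + \beta_1 \beta_2}\bigl((c \circ b) \circ a - c \circ (b \circ a)\bigr),
\end{equation*}
which is equivalent to the required identity by moving the second term on each side across. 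Thus the corollary reduces to a purely formal rewriting of an already-proved statement.

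The only genuine work is bookkeeping: confirming that the indices $(m, n, p)$ and $\mathbb{Z}_2$-degrees $(\tilde f, \tilde g, \tilde h)$ in Theorem \ref{SASrbthm1}(b) line up correctly with the abstract labels $(\alpha_i, \beta_i)$ of Definition \ref{SASrbde10}, and that the extension by zero to $V_m$ with $m < 0$ does not violate any identity (it does not, since both sides then vanish). No further induction or sign juggling is required, and there is no substantial obstacle—once Theorem \ref{SASrbthm1} is in hand, the corollary is essentially immediate.
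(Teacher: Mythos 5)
Your proposal is correct and matches the paper's (implicit) argument: the corollary is stated without further proof precisely because, after noting that each $\circ_i$ has bidegree $(0,0)$ so $\circ$ respects the $\mathbb{Z}\times\mathbb{Z}_2$-grading, the right pre-Lie identity of Definition \ref{SASrbde10} is exactly Theorem \ref{SASrbthm1}(b) under the relabeling $(f,g,h)=(c,a,b)$, as you show. Your sign and index bookkeeping ($n=\alpha_1$, $p=\alpha_2$, $\tilde g=\beta_1$, $\tilde h=\beta_2$) is accurate, so nothing further is needed.
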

\begin{defn}\label{SASrbdef3}
 Let $\{V_m,\circ_i\}$ be a right pre-Lie supersystem and $W=\bigoplus_{(m,n)} W^m_n$ be  a $\mathbb{Z}\times \mathbb{Z}_2$-graded $K$-vector space.  We write $W_m=W^m_{0}\oplus W^m_{1}$ for all $m\in \mathbb{Z}$. We say that $W$ has the structure of a right supermodule over the right  pre-Lie supersystem $\{V_m,\circ_i\}$ if there exist homogeneous linear maps (for which we use the same notation $\circ_i$) from $W_m\times V_n$ to $W_{m+n}$ such that Equation \ref{LS10} holds for all $f\in W_m$, $g\in V_n$ and $h\in V_p.$ Now define $\circ :W\times V\to W$  by
\begin{equation}\label{rbSAS6}
        f\circ g=\begin{cases}
                \sum_{i=0}^{m} (-1)^{ni}f\circ_i g, & \mbox{if } m\ge 0 \\
                0, & \mbox{if } m<0,
              \end{cases}
\end{equation}
for all $f\in W_m$, $g\in V_n.$ Let $V=\bigoplus V_n$ be the $\mathbb{Z}$-graded right  pre-Lie superalgebra given by the right pre-Lie supersystem $\{V_m,\circ_i\}$.
Using similar arguments as in the proof of Theorem \ref{SASrbthm1} we conclude that $W$ is a right supermodule over the $\mathbb{Z}$-graded right  pre-Lie superalgebra $V$ with respect to the action  $\circ$ of $V$ on $W$ given by Equation \ref{rbSAS6}. Consider the $\mathbb{Z}$-graded Lie superalgebra structure given by the  $\mathbb{Z}$-graded right  pre-Lie superalgebra using Theorem \ref{SASrbthm2}.   If we define a right  action  of the $\mathbb{Z}$-graded Lie superalgebra $V$ on $W$ by  $[X,f]=X\circ f$, $X\in W$, $f\in V$, then $W$ is a right  supermodule over the $\mathbb{Z}$-graded Lie superalgebra $V.$
\end{defn}
As in Example \ref{SASrbe1}, if we consider only $\mathbb{Z}_2$-graded $K$-vector space structure of $A$ and put  $U=W=A$, $\phi=Id_A,$ then $\{C^m(A;A),\circ_i\}$ is a right pre-Lie supersystem where elements of $C^m(A;A)$ has degree $m-1$ and dimension $m.$ For $f\in C^m(A;P)$, $g\in C^n(A;A)$ we define $f\circ_i g\in C^{m+n-1}(A;P)$ by
\begin{eqnarray}
   && f\circ_ig(x_1,\cdots,x_i,y_1,\cdots,y_{n+1},x_{i+2},\cdots, x_{m+1}) \nonumber \\
   &=& (-1)^{\tilde{g}(x_1+\cdots+x_i)} f(x_1,\cdots,x_i,g(y_1,\cdots,y_{n+1}), x_{i+2},\cdots, x_{m+1}),
\end{eqnarray}
The definition of $\circ_i$ can be extended to the case when $n=-1$ as follows
\begin{eqnarray}
&& f\circ_ig(x_1,\cdots,x_i,x_{i+2},\cdots, x_{m+1}) \nonumber \\
&=&(-1)^{\tilde{g}(x_1+\cdots+x_i)} f(x_1,\cdots,x_i, g, x_{i+2},\cdots, x_{m+1})
\end{eqnarray}
We can easily verify that $C^*(A;P)$ is a right supermodule over the right pre-Lie supersystem $\{C^m(A;A),\circ_i\}$. From Corollary \ref{SASrbcr1},  $\{C^*(A;A),\circ\}$ is right pre-Lie superalgebra. By Theorem \ref{SASrbthm2}, $\{C^*(A;A),[-,-]\}$ is a $\mathbb{Z}$-graded Lie superalgebra. Also, from  Definition \ref{SASrbdef3}, we can see that $C^*(A;P)$ is a right  supermodule over the right pre-Lie superalgebra $\{C^*(A;A),\circ\}.$ For $f\in C^m_{\tilde{f}}(A;P)$, $g\in C^n_{\tilde{g}}(A;A)$, if we define $[f,g]=f\circ g$, and $[g,f]=-(-1)^{mn+\tilde{f}\tilde{g}}[f,g]$, then $C^*(A;P)$ is (two-sided) supermodule over the $\mathbb{Z}$-graded Lie superalgebra $\{C^*(A;A),[-,-]\}$.

We define $\pi\in C^2_0(A;A)$ by $\pi(a,b)=ab.$ We observe that $\delta \pi=0$ and $\delta Id_A=\pi,$ that is, $\pi\in B_0^2(A;A).$ By using direct definitions we have
\begin{equation}\label{SASrbe11}
f\cup g=(\pi\circ_0 f)\circ_m g,
\end{equation}
for all $f\in C^m(A;A)$ and $g\in C^n(A;A).$  Also, one can easily verify using definitions of $\pi$, $\delta$ and $\circ$ that
\begin{eqnarray}\label{SASrbe5}
  \delta f &=& -f\circ \pi +(-1)^{m-1}\pi\circ f\nonumber\\
  &=&(-1)^{m-1}(\pi\circ f-(-1)^{m-1}f\circ \pi ),
\end{eqnarray}
for all $f\in C^m_{\tilde{f}}(A;A)$.
By using Equation \ref{SASrbeqn120}, we have
\begin{eqnarray}\label{SASrbe6}
   \delta f  &=& [f,-\pi] \nonumber \\
   &=&(-1)^{m-1}[\pi,f].
\end{eqnarray}
From Equation \ref{SASrbe5} it is clear that $\delta$ is a right inner derivation of degree $(1,0)$ of the $\mathbb{Z}$-graded Lie superalgebra $\{C^*(A;A),[-,-]\}$ induced by $-\pi$.  Hence $\{C^*(A;A),[-,-]\}$ is a differential $\mathbb{Z}$-graded Lie superalgebra. Next, we have following  result:
\begin{thm}\label{SASrbthm3}
Let $A$ be a  associative superalgebra. Then
\begin{eqnarray}\label{SASrbeqn50}
f\circ \delta g-\delta(f\circ g)+(-1)^{n-1}\delta f\circ g  &=& (-1)^{n-1}\{(-1)^{\tilde{f}\tilde{g}}g\cup f-(-1)^{mn}f\cup g\},\nonumber\\
\end{eqnarray}
for all $f\in C^m_{\tilde{f}}(A;A)$ and $g\in C^n_{\tilde{g}}(A;A)$.
\end{thm}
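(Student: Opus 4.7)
The plan is to leverage the ``Cartan-type'' formula already derived in (\ref{SASrbe5}), namely $\delta h = (-1)^{k-1}\pi\circ h - h\circ\pi$ for $h\in C^k(A;A)$, in order to re-express both sides in terms of the single operation $\circ$ and the fixed element $\pi\in C^2_0(A;A)$. Once this reduction is made, the identity should follow from the associator identities of Theorem \ref{SASrbthm1} applied with $\pi$ in a distinguished slot, together with the formula $f\cup g = (\pi\circ_0 f)\circ_m g$ from (\ref{SASrbe11}).

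First I would apply (\ref{SASrbe5}) to each of the three differentials in the left-hand side of (\ref{SASrbeqn50}), noting that $f\circ g\in C^{m+n-1}(A;A)$, so the exponent appearing in $\delta(f\circ g)$ is $(-1)^{m+n-2}=(-1)^{m+n}$. This produces six terms, each of the form $(X\circ Y)\circ Z$ or $X\circ(Y\circ Z)$ with at least one of $X,Y,Z$ equal to $\pi$. Grouping them naturally, the six terms split into two pieces: a ``cup piece'' of the form $(-1)^{m+n}\bigl((\pi\circ f)\circ g - \pi\circ(f\circ g)\bigr)$, and a ``pre-Lie piece'' consisting of the four remaining terms involving $(f\circ g)\circ\pi$, $f\circ(g\circ\pi)$, $(f\circ\pi)\circ g$, and $f\circ(\pi\circ g)$.

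Next I would dispose of the pre-Lie piece by specializing Theorem \ref{SASrbthm1}(b) with $h=\pi$: since $\pi$ has pre-Lie degree $1$ and $\mathbb{Z}_2$-degree $0$, that identity reduces to $(f\circ g)\circ\pi - f\circ(g\circ\pi) = (-1)^{n-1}\bigl((f\circ\pi)\circ g - f\circ(\pi\circ g)\bigr)$, which matches exactly the combination appearing in the pre-Lie piece and causes it to vanish. For the cup piece I would invoke Theorem \ref{SASrbthm1}(a) with $F=\pi$, $G=f$, $H=g$. Because $\pi\in V^1$, the index $i$ ranges only over $\{0,1\}$, and the constraints ``$0\le j\le i-1$ or $m\le j\le m+n-1$'' leave exactly two surviving terms: $(\pi\circ_0 f)\circ_m g$, which is $f\cup g$ by (\ref{SASrbe11}), and $(\pi\circ_1 f)\circ_0 g$, which a short direct unwinding of the definitions identifies with $(-1)^{\tilde f\tilde g}\,g\cup f$.

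The only real obstacle is sign bookkeeping. The prefactor $(-1)^{m+n}$ from $\delta(f\circ g)$ must combine with the signs $(-1)^{(n-1)m}$ and $(-1)^{m-1+\tilde f\tilde g}$ coming from the two surviving $\circ_i$ terms of Theorem \ref{SASrbthm1}(a) to reproduce, respectively, $-(-1)^{n-1+mn}$ on the $f\cup g$ summand and $(-1)^{n-1+\tilde f\tilde g}$ on the $g\cup f$ summand, which is precisely the right-hand side of (\ref{SASrbeqn50}). This is a mechanical parity check on both the $\mathbb{Z}$- and $\mathbb{Z}_2$-gradings, but it is where all the content lives, as the rest of the argument is a purely structural rearrangement.
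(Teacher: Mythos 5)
Your argument is correct and is essentially the paper's own proof: substitute $\delta h=(-1)^{k-1}\pi\circ h-h\circ\pi$ (Equation \ref{SASrbe5}) into all three differentials, cancel four of the six resulting terms by the pre-Lie identity (Definition \ref{SASrbde10}, i.e.\ Theorem \ref{SASrbthm1}(b) with $h=\pi$), and evaluate the remaining piece $(-1)^{m+n}\{(\pi\circ f)\circ g-\pi\circ(f\circ g)\}$ via Theorem \ref{SASrbthm1}(a) together with Equation \ref{SASrbe11}, arriving at the same two surviving terms and the same signs. The only cosmetic deviations are that you identify $(\pi\circ_1 f)\circ_0 g=(-1)^{\tilde f\tilde g}\,g\cup f$ by direct unwinding rather than via Equation \ref{LS10}, and your quoted range for the surviving index $j$ is slightly misstated (it runs only up to $m$, the dimension of $\pi\circ_0 f$), which does not affect the conclusion that exactly $(\pi\circ_0 f)\circ_m g$ and $(\pi\circ_1 f)\circ_0 g$ survive.
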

\begin{proof}
Using the Equation \ref{SASrbe5} and Definition \ref{SASrbde10}, we have
\begin{eqnarray}\label{SASrbe8}
&&f\circ \delta g-\delta(f\circ g)+(-1)^{n-1}\delta f\circ g\nonumber\\
 &=& (-1)^{n-1}f\circ (\pi\circ g)-f\circ(g\circ \pi) -(-1)^{m+n-2}\pi\circ(f\circ g)+(f\circ g)\circ \pi\nonumber \\
   &&+ (-1)^{n-1+m-1}(\pi\circ f)\circ g-(-1)^{n-1}(f\circ \pi)\circ g \nonumber\\
   &=&(-1)^{m+n} \{(\pi\circ f)\circ g-\pi\circ(f\circ g)\} \nonumber\\
\end{eqnarray}
Using Theorem \ref{SASrbthm1} $(a)$ and  Equations \ref{SASrbe11}, \ref{LS12} we have
\begin{eqnarray}\label{SASrbe12}
(\pi\circ f)\circ g-\pi\circ(f\circ g)&=&\sum_{\substack{ 0\le j\le i-1\\ m+i\le j\le m}}(-1)^{(m-1)i+(n-1)j}(\pi\circ_i f)\circ_jg\nonumber\\
   &=& (-1)^{(n-1)m}(\pi\circ_0 f)\circ_m g+(-1)^{m-1}(\pi\circ_1 f)\circ_0 g\nonumber \\
   &=& (-1)^{(n-1)m}(\pi\circ_0 f)\circ_m g+(-1)^{m-1+\tilde{f}\tilde{g}}(\pi\circ_0 g)\circ_n f \nonumber \\
   &=&(-1)^{(n-1)m}f\cup g+(-1)^{m-1+\tilde{f}\tilde{g}}g\cup f\nonumber\\
   &=&(-1)^{m-1}\{(-1)^{\tilde{f}\tilde{g}}g\cup f- (-1)^{mn}f\cup g \}.
\end{eqnarray}
Using Equations \ref{SASrbe8},\ref{SASrbe12} we conclude Equation \ref{SASrbeqn50}. This completes  proof of the theorem.
\end{proof}
\begin{cor}
If $A$ is an associative superalgebra, then the  $\mathbb{Z}$-graded associative superalgebra $\{H^*(A;A),\cup\}$ is commutative, that is, if $u\in H^m_{\tilde{u}}(A;A)$, $v\in H^n_{\tilde{v}}(A;A)$, then $$u\cup v=(-1)^{mn+\tilde{u}\tilde{v}}v\cup u.$$
\end{cor}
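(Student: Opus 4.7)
The plan is to read off graded commutativity directly from Theorem \ref{SASrbthm3}, by specialising to cocycles. Let $f\in Z^m_{\tilde{f}}(A;A)$ and $g\in Z^n_{\tilde{g}}(A;A)$ be representatives of arbitrary classes $u\in H^m_{\tilde{u}}(A;A)$ and $v\in H^n_{\tilde{v}}(A;A)$. Since $\delta f=0$ and $\delta g=0$, the left-hand side of Equation \ref{SASrbeqn50} collapses to $-\delta(f\circ g)$, so the identity of Theorem \ref{SASrbthm3} reduces to
\begin{equation*}
-\delta(f\circ g)=(-1)^{n-1}\bigl\{(-1)^{\tilde{f}\tilde{g}}\,g\cup f-(-1)^{mn}\,f\cup g\bigr\}.
\end{equation*}
Thus the cochain $(-1)^{\tilde{f}\tilde{g}}\,g\cup f-(-1)^{mn}\,f\cup g$ is a coboundary, i.e.\ lies in $B^{m+n}(A;A)$.

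Passing to cohomology classes and using the well-definedness of $\cup$ on $H^*(A;A)$ established in the previous theorem, this yields
\begin{equation*}
(-1)^{\tilde{f}\tilde{g}}\,v\cup u=(-1)^{mn}\,u\cup v,
\end{equation*}
and multiplying both sides by $(-1)^{\tilde{u}\tilde{v}}=(-1)^{\tilde{f}\tilde{g}}$ gives
\begin{equation*}
u\cup v=(-1)^{mn+\tilde{u}\tilde{v}}\,v\cup u,
\end{equation*}
which is the desired graded commutativity.

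There is essentially no obstacle here: all the work is contained in Theorem \ref{SASrbthm3}, whose key identity was already packaged so that the $f\circ\delta g$ and $\delta f\circ g$ terms vanish precisely for cocycles. The only points to check carefully in writing the proof up are (i) the independence of the final identity from the choice of representatives $f,g$, which follows because $\cup$ descends to cohomology by the earlier theorem on $\{H^*(A;A),\cup\}$, and (ii) keeping the sign bookkeeping $\tilde{u}=\tilde{f}$, $\tilde{v}=\tilde{g}$ consistent with the $\mathbb{Z}\times\mathbb{Z}_2$-grading conventions fixed in Section \ref{rbsec3}.
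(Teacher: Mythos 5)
Your argument is correct and is essentially identical to the paper's own proof: both specialise the identity of Theorem \ref{SASrbthm3} to cocycles $f\in Z^m_{\tilde{f}}(A;A)$, $g\in Z^n_{\tilde{g}}(A;A)$, observe that $(-1)^{\tilde{f}\tilde{g}}g\cup f-(-1)^{mn}f\cup g$ is then the coboundary $(-1)^{n}\delta(f\circ g)$, and pass to cohomology classes. The only differences are presentational (you spell out the sign bookkeeping and the independence of representatives, which the paper leaves implicit).
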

\begin{proof}
By using Theorem \ref{SASrbthm3}, for $f\in Z^m_{\tilde{f}}(A;A)$, $g\in Z^n_{\tilde{g}}(A;A)$ we have
  $$\{(-1)^{\tilde{f}\tilde{g}}g\cup f- (-1)^{mn}f\cup g = (-1)^{n}\delta (f\circ g).$$
Hence we conclude that $u\cup v=(-1)^{mn+\tilde{u}\tilde{v}}v\cup u,$ for all  $u\in H^m_{\tilde{u}}(A;A)$, $v\in H^n_{\tilde{v}}(A;A)$.
\end{proof}
If $A$ is an associative superalgebra and $P$ is a (two sided ) supermodule over $A,$ then $A\oplus P$ has the structure of an associative superalgebra with respect to the multiplication given by $$(a,x).(b,y)=(a.b,a.y+x.b),$$ for all  $a,b\in A$, $x,y\in P.$ Consider the natural inclusions $C^n(A;A),C^n(A;P)\subset C^n(A\oplus P;A\oplus P)$ defined by $f(a_1,\cdots, a_n)=0$, if some $a_i$ is in $P$,for all $f$ in $C^n(A;A)$ or $C^n(A;P)$.  Now, Theorem \ref{SASrbthm3} holds for the  associative superalgebra $A\oplus P$.  Observe that if $f\in C^m(A;A)$, $g\in C^n(A;P)$ are cocyles, then $g\in C^n(A\oplus P;A\oplus P)$ is a cocycle but $f\in C^n(A\oplus P;A\oplus P)$ need not be a cocycle.  Also, observe that if  $f\in Z^m(A;A)$, $g\in Z^n(A;P)$, then in the Equation \ref{SASrbeqn50} in the Theorem \ref{SASrbthm3} for the associative superalgebra $A\oplus P$, $(\delta f)\circ g=0,$ $f\circ (\delta g)=0.$ This implies that   if  $f\in Z^m(A;A)$, $g\in Z^n(A;P)$, then
$$\delta(f\circ g)=- (-1)^{n-1}\{(-1)^{\tilde{f}\tilde{g}}g\cup f-(-1)^{mn}f\cup g\}.$$  Hence we have following result:
\begin{cor}
Let $A$ be a $\mathbb{Z}$-graded associative superalgebra and $P$ be a supermodule over $A.$ Then $$u\cup v=(-1)^{mn+\tilde{u}\tilde{v}}v\cup u, $$
if  $u\in H^m_{\tilde{u}}(A;A)$, $v\in H^n_{\tilde{v}}(A;P)$.
\end{cor}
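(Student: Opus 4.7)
The plan is to mimic the proof of the previous corollary, but now inside the ambient associative superalgebra $A\oplus P$ with multiplication $(a,x)\cdot(b,y)=(ab,\,ay+xb)$, as described in the paragraph immediately preceding this statement.

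First, I would make precise the embeddings $C^n(A;A),\;C^n(A;P)\hookrightarrow C^n(A\oplus P;\,A\oplus P)$ by declaring that a cochain vanishes whenever any of its arguments lies in $P$. Using formula (\ref{rbSAS8}) for $\delta$, one verifies directly that for $f\in C^m(A;A)$ (respectively $g\in C^n(A;P)$), the differential $\delta f$ (respectively $\delta g$) computed in $A\oplus P$ agrees with the original differential, still extended by zero; in particular, cocycles are sent to cocycles. This step requires some bookkeeping: the semi-direct action annihilates the $P\cdot P$ component, which is exactly what makes the vanishing conditions propagate through each term of the Hochschild formula.

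Next, I would apply Theorem \ref{SASrbthm3} to the superalgebra $A\oplus P$ with $f\in Z^m_{\tilde{f}}(A;A)$ and $g\in Z^n_{\tilde{g}}(A;P)$. Since $\delta f=0$ and $\delta g=0$ inside $C^*(A\oplus P;A\oplus P)$, identity (\ref{SASrbeqn50}) collapses to
\begin{equation*}
-\delta(f\circ g)=(-1)^{n-1}\bigl\{(-1)^{\tilde{f}\tilde{g}}\,g\cup f-(-1)^{mn}\,f\cup g\bigr\},
\end{equation*}
so $(-1)^{\tilde{f}\tilde{g}}g\cup f-(-1)^{mn}f\cup g=(-1)^{n}\delta(f\circ g)\in B^{m+n}(A;P)$. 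Passing to cohomology classes yields $(-1)^{\tilde{u}\tilde{v}}v\cup u=(-1)^{mn}u\cup v$ in $H^{m+n}(A;P)$, which is equivalent (since squaring any sign is $+1$) to the asserted identity $u\cup v=(-1)^{mn+\tilde{u}\tilde{v}}v\cup u$.

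The main obstacle, I expect, is not the algebraic manipulation itself but justifying that the $\circ$-product and $\cup$-product of the zero-extended cochains, computed in $C^*(A\oplus P;A\oplus P)$, genuinely land in the image of $C^*(A;P)$ and coincide there with the original operations. This amounts to checking that inserting an element of $P$ into any slot of $f\circ_i g$ or $f\cup g$ produces zero, and that every non-trivial output lies in the summand $P\subset A\oplus P$; both facts follow from the vanishing convention on the embeddings together with the shape of the semi-direct multiplication. Once this compatibility is in place, the conclusion is a direct specialization of the argument used for the preceding corollary.
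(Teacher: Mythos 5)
There is a genuine gap, and it sits exactly at the point the paper itself flags as delicate. You claim that for $f\in Z^m(A;A)$ the zero-extension of $f$ to $C^m(A\oplus P;A\oplus P)$ is again a cocycle. It is not: evaluating the differential \ref{rbSAS8} of the extended $f$ on a tuple $(p,x_2,\dots,x_{m+1})$ with $p\in P$ and $x_j\in A$, every term vanishes by the zero-extension convention except the first, leaving $\pm\, p\cdot f(x_2,\dots,x_{m+1})$, which is in general nonzero (similarly $\pm\, f(x_1,\dots,x_m)\cdot p$ when $p$ sits in the last slot). The mechanism that saves the $P$-valued cochain $g$ --- namely $P\cdot P=0$ in $A\oplus P$, which kills the two end terms --- is simply absent for the $A$-valued cochain $f$; the paper states this explicitly ("$g$ is a cocycle but $f$ need not be a cocycle"). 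Consequently your collapse of \ref{SASrbeqn50} via "$\delta f=0$ and $\delta g=0$ inside $C^*(A\oplus P;A\oplus P)$" is unjustified. There is also a second, independent problem with the orientation you chose: with $f\in C^m(A;A)$ extended by zero and $g\in C^n(A;P)$, the composite $f\circ g$ plugs values of $g$, which lie in $P$, into slots of $f$, which annihilates any $P$-argument; hence $f\circ g\equiv 0$. If your collapsed identity were correct it would then read $(-1)^{\tilde f\tilde g}g\cup f-(-1)^{mn}f\cup g=0$ as cochains, i.e.\ strict (not merely cohomological) commutativity, which is false in general --- so the argument as written proves too much and cannot be repaired by bookkeeping alone.

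The correct route (and the one the paper intends, up to a slip in which letter plays which role) replaces the false statement "$\delta f=0$ in the big complex" by the weaker vanishing of the relevant \emph{compositions}: apply Theorem \ref{SASrbthm3} in $C^*(A\oplus P;A\oplus P)$ so that the $P$-valued cocycle is the cochain into which values are substituted, i.e.\ work with $g\circ f$. Then $\delta g=0$ genuinely holds (by $P\cdot P=0$), so $(\delta g)\circ f=0$; and $g\circ(\delta f)=0$ even though $\delta f\neq 0$, because $\delta f$ vanishes on tuples from $A$ (as $f\in Z^m(A;A)$) while on tuples containing a $P$-entry its values lie in $P$, which $g$ annihilates. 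Identity \ref{SASrbeqn50} then collapses to an expression of $(-1)^{\tilde f\tilde g}f\cup g-(-1)^{mn}g\cup f$ as $\pm\,\delta(g\circ f)$, and $g\circ f$ restricted to arguments in $A$ is an honest element of $C^{m+n-1}(A;P)$ whose differential there agrees with the restricted big-complex differential; passing to cohomology gives the corollary. Your final paragraph gestures at the right compatibility checks for $\cup$, but the proof stands or falls on the two points above, which your proposal gets wrong.
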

\begin{thm}
Let $A$ be a  associative superalgebra and $P$ be a (two sided) supermodule over $A.$  Then $C^*(A;P)$ is a (two sided) supermodule over the $\mathbb{Z}$-graded Lie superalgebra $\{C^*(A;A),[-,-]\}$. Also, we have
$$[Z^{m_1}_{n_1}(A;P),Z^{m_2}_{n_2}(A;A)]\subset Z^{m_1+m_2}_{n_1+n_2}(A;P);$$
$$[B^{m_1}_{n_1}(A;P),Z^{m_2}_{n_2}(A;A)],[Z^{m_1}_{n_1}(A;P),Z^{m_2}_{n_2}(A;A)]\subset B^{m_1+m_2}_{n_1+n_2}(A;P),$$
$\{H^*(A;A),[-,-]\}$ is a  $\mathbb{Z}$-graded Lie superalgebra and  $H^*(A;P)$ is a (two sided) supermodule over $\{H^*(A;A),[-,-]\}$.
\end{thm}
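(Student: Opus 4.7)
The plan is to deduce the theorem from the right pre-Lie supersystem structure on $\{C^m(A;A),\circ_i\}$ acting on $C^*(A;P)$ (already developed in the preceding discussion), combined with the identification of $\delta$ as an inner derivation by $-\pi$ and the ``magic formula'' of Theorem \ref{SASrbthm3} applied to the associative superalgebra $A\oplus P$.

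For the supermodule assertion, the preceding paragraphs already give $C^*(A;P)$ the structure of a right supermodule over the pre-Lie superalgebra $\{C^*(A;A),\circ\}$, and Definition \ref{SASrbdef3} together with Theorem \ref{SASrbthm2} promotes this to a right supermodule over the $\mathbb{Z}$-graded Lie superalgebra $\{C^*(A;A),[-,-]\}$ via $[X,f]=X\circ f$. To upgrade to a two sided structure, I would use the antisymmetry $[g,f]=-(-1)^{mn+\tilde f\tilde g}[f,g]$ declared just before the theorem and verify the graded Jacobi identity for $C^*(A;A)\oplus C^*(A;P)$. The cleanest route is to observe that $A\oplus P$ is itself an associative superalgebra, so $C^*(A\oplus P;A\oplus P)$ carries the pre-Lie supersystem structure of Example \ref{SASrbe1}; the inclusions $C^*(A;A),C^*(A;P)\hookrightarrow C^*(A\oplus P;A\oplus P)$ (extending a cochain by zero on any input lying in $P$) then transport the ambient Lie identities to the required ones.

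The core step is the Leibniz-type identity
\begin{equation}\label{leibnizplan}
\delta[f,g]=[\delta f,g]+(-1)^{m-1+\tilde f}[f,\delta g],
\end{equation}
valid for $f\in C^m_{\tilde f}(A;P)$ and $g\in C^n_{\tilde g}(A;A)$ with the appropriate sign convention. This is a formal consequence of Equation \ref{SASrbe6}, which realizes $\delta$ as the inner derivation of $\{C^*(A\oplus P;A\oplus P),[-,-]\}$ induced by $-\pi$; any inner derivation of a $\mathbb{Z}$-graded Lie superalgebra is automatically a graded derivation of the bracket. Equivalently, \ref{leibnizplan} can be extracted from Theorem \ref{SASrbthm3} applied to $A\oplus P$ together with the vanishing statements $(\delta f)\circ g=0$ and $f\circ(\delta g)=0$ noted just before the last corollary, which force the cup product terms to drop out after antisymmetrization. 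From \ref{leibnizplan} the containments are immediate: if $f\in Z^{m_1}_{n_1}(A;P)$ and $g\in Z^{m_2}_{n_2}(A;A)$, then $\delta[f,g]=0$, while if $f=\delta h$ then $[f,g]=\delta[h,g]\pm[h,\delta g]=\delta[h,g]$, and the other coboundary case is symmetric. These containments let one set $[f+B,g+B']:=[f,g]+B''$, and the graded Jacobi identity on $C^*$ descends to make $\{H^*(A;A),[-,-]\}$ a $\mathbb{Z}$-graded Lie superalgebra with $H^*(A;P)$ a (two sided) supermodule over it.

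The main obstacle is the careful verification of \ref{leibnizplan} together with the embedding claims for $C^*(A;P)\hookrightarrow C^*(A\oplus P;A\oplus P)$: the vanishing assertions for $\delta f$ and $\delta g$ depend on $f$ and $g$ taking values in complementary ideals of $A\oplus P$, and the sign bookkeeping (distinguishing the $\mathbb{Z}$-degrees $m-1$ and $n-1$ of the pre-Lie superalgebra from the cochain degrees $m,n$, and tracking the $\mathbb{Z}_2$-degrees throughout) must be done consistently. Once the Leibniz identity is in place, the rest is formal.
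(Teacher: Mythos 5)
Your proposal is correct and follows essentially the same route as the paper: pass to the associative superalgebra $A\oplus P$, use that $\delta$ is the inner (right) derivation of $\{C^*(A\oplus P;A\oplus P),[-,-]\}$ induced by $-\pi$ to obtain the Leibniz rule for the bracket, restrict to the subspaces $C^*(A;A)$ and $C^*(A;P)$, and deduce the stated containments and the induced structures on cohomology. The only quibble is the sign in your Leibniz identity: the paper's right-derivation form is $\delta[f,g]=[f,\delta g]+(-1)^{n-1}[\delta f,g]$, so the exponent should not involve $\tilde{f}$ (since $\delta$ has $\mathbb{Z}_2$-degree $0$); this slip is immaterial for the cocycle/coboundary containments, as each term is handled separately.
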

\begin{proof}
From our previous discussion we already know that $C^*(A;P)$ is a supermodule over the $\mathbb{Z}$-graded Lie superalgebra $\{C^*(A;A),[-,-]\}$. Consider the  associative superalgebra $A\oplus P$ and $\mathbb{Z}$-graded Lie superalgebra $\{C^*(A\oplus P;A\oplus P),[-,-]\}$.
Since coboundary map $\delta$ is an inner right derivation  of $\{C^*(A\oplus P;A\oplus P),[-,-]\}$ induced by $-\pi$, we have
\begin{eqnarray}\label{SASrbeqn51}
     \delta[f,g]&=&[f,\delta g]+(-1)^{n-1}[\delta f,g]
  \end{eqnarray}
 for all $f\in C^m(A\oplus P;A\oplus P)$, $g\in C^n(A\oplus P;A\oplus P)$. In particular, Equation \ref{SASrbeqn51} holds for $f $ in $C^m(A;A)$ or $C^m(A;P)$, $g$ in $C^n(A;P)$ or $C^n(A;A)$; and for $\delta$ as a coboundary map on $C^*(A;A)$ or $C^*(A;P)$. From this we conclude the theorem.
\end{proof}

\section{Formal Deformation of Associative Superalgebras }\label{rbsec20}
 Given an  associative superalgebra $A=A_0\oplus A_1$, we denote the ring of all formal power series with coefficients in $A$ by $A[[t]]$. Thus  $A[[t]]=A_0[[t]]\oplus A_1[[t]]$. If  $a_t\in A[[t]]$, then $a_t=a_{t_0}\oplus a_{t_1}$, where $a_{t_0}\in A_0[[t]]$ and $a_{t_1}\in A_1[[t]]$. $K[[t]]$ denotes the  ring of all formal power series with coefficients in $K$.

\begin{defn}\label{rb2}
 A formal one-parameter deformation of an associative  superalgebra $A=A_0\oplus A_1$ is a $K[[t]]$-bilinear map  $$\mu_t : A[[t]]\times A[[t]]\to A[[t]]$$ satisfying the following properties:
\begin{itemize}
  \item[(a)]  $\mu_t(a,b)=\sum_{i=0}^{\infty}\mu_i(a,b) t^i$, for all  $a,b\in A$, where $\mu_i:A\times A\to A$, $i\ge 0$ are  bilinear homogeneous mappings of degree zero  and $\mu_0(a,b)=\mu( a,b)$ is the original  product on $A$.
\item[(b)]\begin{equation}\label{DLT1}
   \mu_t( \mu_t(a,b),c)=\mu_t(a,\mu_t(b,c)),
  \end{equation}
  for all homogeneous $a,b,c\in A$.
\end{itemize}
The Equation  \ref{DLT1} is equivalent to following equation:
   \begin{eqnarray}\label{rbeqn1}
 \sum_{i+j=r} \mu_i( \mu_j(a,b),c)&=& \sum_{i+j=r}\{\mu_i(a,\mu_j(b,c)),
   \end{eqnarray}
for all homogeneous $a,b,c\in A$.
\end{defn}
Next we give definition of a formal deformation of finite order of an associative superalgebra $A$.
\begin{defn}\label{rb3}
  A  formal one-parameter deformation of order $n$  of an associative superalgebra  $A=A_0\oplus A_1$ is a $K[[t]]$-bilinear map  $$\mu_t : A[[t]]\times A[[t]]\to A[[t]]$$ satisfying the following properties:
\begin{itemize}
  \item[(a)]  $\mu_t(a,b)=\sum_{i=0}^{n}\mu_i(a,b) t^i$,  $\forall a,b,c\in A$, where $\mu_i:A\times A\to A$, $0\le i\le n$, are $K$-bilinear homogeneous maps of degree $0$, and $\mu_0$ is the original product on $A$.
      \item[(b)] \begin{equation}\label{FDLS}
   \mu_t( \mu_t(a,b),c)=\mu_t(a,\mu_t(b,c)),
  \end{equation}
  for all homogeneous $a,b,c\in A$.
\end{itemize}
\end{defn}
\begin{rem}\label{rbrem1}
  \begin{enumerate}
    \item For $r=0$, conditions \ref{rbeqn1} is equivalent to the fact that $A$ is an associative superalgebra.
    \item For $r=1$, conditions \ref{rbeqn1} is equivalent to
     \begin{eqnarray*}\label{rrbeqn1}
      0&=&-\mu_1(\mu_0(a,b),c)-\mu_0(\mu_1(a,b),c)\\
       &&+ \mu_1(a,\mu_0(b,c))+ \mu_0(a,\mu_1(b,c))\\
       &=&\delta^2\mu_1(a,b,c); \;\text{for all homogeneous}\; a,b,c\in A.
   \end{eqnarray*}
Thus for $r=1$,  \ref{rbeqn1} is equivalent to saying that $\mu_1\in C_0^2(A;A)$  is a cocycle. In general, for $r\ge 0$, $\mu_r$ is just a 2-cochain, that is,  $\mu_r\in C_0^2(A;A).$
    \end{enumerate}
\end{rem}

\begin{defn}
 We call the cochain  $\mu_1 \in C_0^2(A;A)$  infinitesimal of the   deformation $\mu_t$. In general, if $\mu_i=0,$ for $1\le i\le n-1$, and $\mu_n$ is a nonzero cochain in  $C_0^2(A;A)$, then we call  $\mu_n$  n-infinitesimal of the  deformation $\mu_t$.
\end{defn}
\begin{prop}
  The infinitesimal   $\mu_1\in C_0^2(A;A)$ of the  deformation  $\mu_t$ is a cocycle. In general, $n$-infinitesimal  $\mu_n$ is a cocycle in $C_0^2(A;A).$
\end{prop}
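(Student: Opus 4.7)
The plan is to extract the relevant component of the associativity identity (\ref{rbeqn1}) and compare it term-by-term with the explicit formula (\ref{SLCOH3}) for $\delta^2$. Since each $\mu_i$ is homogeneous of degree $0$ and $\mu_0$ is the original associative product, all the Koszul signs in $\delta^2 \mu_n$ collapse to $+1$, so the identification is essentially a sign-free bookkeeping exercise.

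For the infinitesimal statement, I would simply invoke Remark \ref{rbrem1}(2), which already records that the $r=1$ instance of (\ref{rbeqn1}) is precisely $\delta^2\mu_1=0$. For the general case, assume $\mu_1=\cdots=\mu_{n-1}=0$ and $\mu_n\neq 0$, and look at the $r=n$ instance of (\ref{rbeqn1}):
\begin{equation*}
\sum_{i+j=n}\mu_i(\mu_j(a,b),c)=\sum_{i+j=n}\mu_i(a,\mu_j(b,c)).
\end{equation*}
Under the vanishing hypothesis, every term with $1\le i,j\le n-1$ is zero, so only the pairs $(i,j)=(0,n)$ and $(i,j)=(n,0)$ survive on each side, giving
\begin{equation*}
\mu_0(\mu_n(a,b),c)+\mu_n(\mu_0(a,b),c)=\mu_0(a,\mu_n(b,c))+\mu_n(a,\mu_0(b,c)).
\end{equation*}

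Rewriting with $\mu_0(x,y)=xy$ and rearranging, this is
\begin{equation*}
-\mu_n(ab,c)+\mu_n(a,bc)+a\,\mu_n(b,c)-\mu_n(a,b)\,c=0.
\end{equation*}
Comparing with (\ref{SLCOH3}) and using $\deg(\mu_n)=0$ (so $(-1)^{x_1\mu_n}=1$), the left-hand side is exactly $\delta^2\mu_n(a,b,c)$. Hence $\mu_n\in Z_0^2(A;A)$.

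I do not anticipate any real obstacle: the one thing to keep straight is that the $K$-bilinear maps $\mu_i$ are all of degree $(0,0)$, so no nontrivial superalgebra signs intervene, and the two sides of the $n$-th associativity equation (after discarding the vanishing middle terms) line up cleanly with the four summands in the coboundary formula. If desired, one could phrase the argument uniformly for all $r$ by writing the $r$-th equation as $\delta^2\mu_r = -\sum_{i+j=r,\ i,j\ge 1}(\mu_i\circ\mu_j-\text{rearranged})$, but the direct reduction above is the cleanest route to the stated proposition.
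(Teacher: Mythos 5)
Your argument is correct and is exactly the route the paper takes: the $n=1$ case is Remark \ref{rbrem1}(2), and the general case is the $r=n$ instance of Equation \ref{rbeqn1} with the vanishing of $\mu_1,\dots,\mu_{n-1}$ killing the middle terms, leaving precisely $\delta^2\mu_n=0$ via formula \ref{SLCOH3} (the paper merely states ``for $n>1$, proof is similar,'' which you have spelled out, correctly noting that $\deg(\mu_n)=0$ trivializes the signs).
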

\begin{proof}
  For n=1, proof is obvious from the Remark \ref{rbrem1}. For $n>1$, proof is similar.
\end{proof}
From Equation \ref{rbeqn1}, we have
\begin{eqnarray}\label{rbeqn200}
&&\sum_{\substack{ i+j=r\\i,j>0}} \mu_i( \mu_j(a,b),c)- \sum_{\substack{ i+j=r\\i,j>0}}\{\mu_i(a,\mu_j(b,c))\notag\\
&=&\mu_r(a,\mu_0(b,c))+ \mu_0(a,\mu_r(b,c)) -\mu_0(\mu_r(a,b),c)- \mu_r(\mu_0(a,b),c)\notag\\
&=&\delta^2\mu_r(a,b,c)
\end{eqnarray}
\begin{defn}
Given a formal deformation $\mu_t$ of order $n$ of  an associative superalgebra $A=A_0\oplus A_1$,   we define a $3$-cochain $Ob_{n+1}(A)$ by 
$$Ob_{n+1}(A)=\sum_{\substack{ i+j=r\\i,j>0}} \mu_i( \mu_j(a,b),c)- \sum_{\substack{ i+j=r\\i,j>0}}\{\mu_i(a,\mu_j(b,c)).$$
We call $Ob_{n+1}(A)$  $(n+1)$th obstruction cochain for extending $\mu_t$ to a deformation of $A$ of order $n+1$.  
\end{defn}
As an application of Theorem \ref{SASrbthm1} we conclude following Lemma:
\begin{lem}\label{rblm210}
For all $\mu_\alpha,\mu_\beta,\mu_\gamma\in C^n_{0}(A;A)$ with $\beta,\gamma$ an even integer, we have
\begin{itemize}
\item[(a)] $\mu_\alpha\circ(\mu_\beta\circ \mu_\beta)=(\mu_\alpha\circ\mu_\beta)\circ \mu_\beta,$
\item [(b)] $(\mu_\alpha\circ\mu_\beta)\circ \mu_\gamma-\mu_\alpha\circ(\mu_\beta\circ \mu_\gamma)=-(\mu_\alpha\circ\mu_\gamma)\circ \mu_\beta+\mu_\alpha\circ(\mu_\gamma\circ \mu_\beta).$
\end{itemize}

\end{lem}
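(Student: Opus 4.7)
The plan is to deduce both parts as direct specializations of Theorem \ref{SASrbthm1}(b) to the situation where all three inputs are $\mathbb{Z}_2$-even and carry the same $\mathbb{Z}$-degree. Since $\mu_\alpha,\mu_\beta,\mu_\gamma$ all lie in $C^n_0(A;A)$, in the right pre-Lie supersystem $\{C^m(A;A),\circ_i\}$ they share a common integer degree and $\mathbb{Z}_2$-degree $0$; therefore the sign $(-1)^{np+\tilde{g}\tilde{h}}$ appearing in Theorem \ref{SASrbthm1}(b) collapses to $\pm 1$ and, under the parity hypothesis of the lemma, to $-1$. Substituting $f=\mu_\alpha$, $g=\mu_\beta$, $h=\mu_\gamma$ into that identity yields
\[
(\mu_\alpha\circ\mu_\beta)\circ\mu_\gamma-\mu_\alpha\circ(\mu_\beta\circ\mu_\gamma)
=-\bigl((\mu_\alpha\circ\mu_\gamma)\circ\mu_\beta-\mu_\alpha\circ(\mu_\gamma\circ\mu_\beta)\bigr),
\]
which is exactly part (b) after moving the sign inside.

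For part (a), I specialize (b) by taking $\mu_\gamma=\mu_\beta$. The right-hand side of the displayed identity then becomes the negative of its left-hand side, so
\[
2\bigl((\mu_\alpha\circ\mu_\beta)\circ\mu_\beta-\mu_\alpha\circ(\mu_\beta\circ\mu_\beta)\bigr)=0.
\]
Since $K$ has characteristic zero, division by $2$ is legitimate, and (a) follows immediately.

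There is essentially no obstacle: the entire content of the lemma is packaged into Theorem \ref{SASrbthm1}(b), and both parts reduce to reading off the sign $(-1)^{(n-1)^2}=-1$ forced by the parity hypothesis (giving (b)) and then invoking the characteristic-zero trick $2x=0\Rightarrow x=0$ (giving (a)). The only thing to be careful about is book-keeping the grading convention under which $C^n(A;A)$ sits in the pre-Lie supersystem with integer degree $n-1$, so that the parity assumption in the lemma matches the exponent produced by Theorem \ref{SASrbthm1}(b).
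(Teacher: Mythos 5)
Your proof is correct and takes essentially the paper's own route: the paper gives no separate argument, presenting the lemma purely as an application of Theorem \ref{SASrbthm1}, which is exactly what you do — part (b) is Theorem \ref{SASrbthm1}(b) specialized to three even cochains of equal degree, and part (a) follows by setting $\mu_\gamma=\mu_\beta$ and dividing by $2$, legitimate since $\operatorname{char}K=0$. Your bookkeeping remark is also the right reading of the lemma's awkward hypothesis: what actually forces the sign $(-1)^{(n-1)^2}$ to be $-1$ is that the cochain dimension $n$ is even (supersystem degree $n-1$ odd), as in the paper's application to $C^2_0(A;A)$.
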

\begin{thm}
The $(n+1)$th obstruction cochain is a $3$-cocycle.
\end{thm}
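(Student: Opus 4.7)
My plan is to follow Gerstenhaber's classical strategy adapted to the super setting: express $Ob_{n+1}$ as a sum of pre-Lie products, differentiate using Theorem \ref{SASrbthm3}, substitute the associativity identities available up to order $n$, and reduce everything to a triple sum of associators that collapses by Lemma \ref{rblm210}.

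First I would rewrite the obstruction in terms of the pre-Lie product $\circ$ from Definition \ref{rbSASdef1}. A direct computation for $2$-cochains $f,g \in C^2_0(A;A)$ gives $(f\circ g)(a,b,c)=f(g(a,b),c)-f(a,g(b,c))$, so that
$$Ob_{n+1} \;=\; \sum_{\substack{i+j=n+1\\ i,j\ge 1}} \mu_i\circ\mu_j.$$
Next, I would apply Theorem \ref{SASrbthm3} to each pair $(\mu_i,\mu_j)$ (with dimensions $m=n=2$ and $\mathbb{Z}_2$-degrees zero, so every super sign is $+1$), obtaining
$$\delta(\mu_i\circ\mu_j)\;=\;\mu_i\circ\delta\mu_j-\delta\mu_i\circ\mu_j+\mu_j\cup\mu_i-\mu_i\cup\mu_j.$$
Summing over the symmetric index set $\{(i,j):i+j=n+1,\,i,j\ge 1\}$, the cup-product terms cancel in pairs under $i\leftrightarrow j$.

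Next, I would use the associativity identity \ref{rbeqn200} in the form $\delta\mu_r=Ob_r=\sum_{k+l=r,\,k,l\ge 1}\mu_k\circ\mu_l$, which holds for every $1\le r\le n$ because $\mu_t$ is a deformation of order $n$ (and is vacuously $0$ when $r=1$). Substituting this for $\delta\mu_i$ and $\delta\mu_j$ and re-indexing the resulting triple sums yields
$$\delta Ob_{n+1}\;=\;\sum_{\substack{i+k+l=n+1\\ i,k,l\ge 1}}\Bigl[\mu_i\circ(\mu_k\circ\mu_l)-(\mu_i\circ\mu_k)\circ\mu_l\Bigr].$$

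Finally, Lemma \ref{rblm210}(b) says this associator is antisymmetric under the interchange of $\mu_k$ and $\mu_l$, while Lemma \ref{rblm210}(a) says it vanishes when $k=l$. Therefore pairing the $(i,k,l)$ and $(i,l,k)$ contributions cancels the sum completely, and $\delta Ob_{n+1}=0$. The main obstacle is organizational rather than conceptual: one must keep the indexing conventions straight (particularly the identification of $C^m(A;A)$ with the pre-Lie degree-$(m-1)$ component, and the boundary cases where the inner sum defining $Ob_r$ is empty). Because all $\mu_i$ sit in $C^2_0(A;A)$, no super signs obstruct the classical argument, and the sign bookkeeping from Theorem \ref{SASrbthm3} and Equation \ref{SASrbe5} is routine.
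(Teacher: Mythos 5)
Your proposal is correct and follows essentially the same route as the paper: writing $Ob_{n+1}=\sum_{i+j=n+1,\,i,j\ge 1}\mu_i\circ\mu_j$, applying Theorem \ref{SASrbthm3} (specialized to degree-$(2,0)$ cochains) so the cup terms cancel by the $i\leftrightarrow j$ symmetry, substituting $\delta\mu_r=\sum_{\alpha+\beta=r,\,\alpha,\beta\ge1}\mu_\alpha\circ\mu_\beta$ for $r\le n$, and collapsing the resulting triple associator sum via Lemma \ref{rblm210}. The only cosmetic quibble is the phrase ``every super sign is $+1$'': the $\mathbb{Z}_2$-signs vanish but the dimension sign $(-1)^{n-1}$ does not, and your displayed identity handles it correctly.
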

\begin{proof}
Consider the right pre-Lie superalgebra $\{c*(A,A),\circ\}$.  By definition of $\circ,$ we have 
\begin{equation}\label{rbeqn201}
Ob_{n+1}(A)(a,b,c)=\sum_{\substack{ i+j=n+1\\i,j>0}} \mu_i\circ \mu_j(a,b,c).
\end{equation} 
By using Theorem \ref{SASrbthm3} we have 
\begin{eqnarray}\label{rbeqn202}
\mu_i\circ \delta \mu_j-\delta(\mu_i\circ \mu_j)-\delta \mu_i\circ \mu_j  &=&- \mu_j\cup \mu_i +\mu_i\cup \mu_j,
\end{eqnarray}
for all $\mu_i,\mu_j\in C^2_{0}(A;A)$.
From relations \ref{rbeqn201} and  \ref{rbeqn202} we have 
\begin{eqnarray}\label{rbeqn203}
\delta Ob_{n+1}(A)&=&\sum_{\substack{ i+j=n+1\\i,j>0}}\delta( \mu_i\circ \mu_j)\nonumber\\
 &=& \sum_{\substack{ i+j=n+1\\i,j>0}}\{\mu_i\circ \delta \mu_j-\delta \mu_i\circ \mu_j  + \mu_j\cup \mu_i -\mu_i\cup \mu_j\}\nonumber\\
 &=& \sum_{\substack{ i+j=n+1\\i,j>0}}\{\mu_i\circ \delta \mu_j-\delta \mu_i\circ \mu_j \}
 \end{eqnarray}
 Observe that if $\mu_t=\sum_{i=0}^{n}\mu_i t^i$ is a deformation of $A$ of order $n$, then 
 \begin{equation}\label{rbeqn204}
\delta \mu_{\gamma}(a,b,c)=\sum_{\substack{ \alpha+\beta=\gamma\\\alpha,\beta>0}} \mu_\alpha\circ \mu_\beta(a,b,c),
 \end{equation}
$ \forall \gamma\le n.$
 Using Equations  \ref{rbeqn203} and \ref{rbeqn204} we have 
 \begin{eqnarray}
 \delta Ob_{n+1}(A)&=&  \sum_{\substack{ i+j+k=n+1\\i,j,k>0}}\{\mu_i\circ  (\mu_j \circ \mu_k ) - (\mu_i\circ mu_j)\circ \mu_k\} \notag\\
 &=& \sum_{\substack{ i+j+k=n+1\\i,j,k>0,j<k}}\{\mu_i\circ  (\mu_j \circ \mu_k+\mu_k\circ\mu_j )\notag\\
 && -  (\mu_i\circ \mu_j)\circ \mu_k +(\mu_i\circ\mu_k)\circ \mu_j\}\;\;\;\;\text{(Using Lemma \ref{rblm210} Part (a))}\notag\\
 &=&0\;\;\;\;\text{(Using Lemma \ref{rblm210} Part (b))} \notag
 \end{eqnarray}
 
\end{proof}
As a consequence of above theorem we conclude following corollary
\begin{cor}
If $H^3(A;A)=0,$ then every $2$-cocycle in $C^2_0(A;A)$ is an infinitesimal of some deformation of $A.$
\end{cor}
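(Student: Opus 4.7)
The plan is a standard obstruction-theoretic induction on the order of the deformation. Starting from a given $2$-cocycle $\mu_1 \in Z^2_0(A;A)$, I will construct cochains $\mu_2, \mu_3, \ldots \in C^2_0(A;A)$ one at a time so that each truncation $\mu_t^{(n)} = \mu_0 + \sum_{i=1}^{n}\mu_i t^i$ is a formal deformation of $A$ of order $n$ in the sense of Definition \ref{rb3}. Taking the limit $n \to \infty$ yields the desired formal deformation with infinitesimal $\mu_1$.

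For the base case, $\mu_t^{(1)} = \mu_0 + \mu_1 t$ is a deformation of order $1$: condition \ref{rbeqn1} holds at $r=0$ because $A$ is associative, and at $r=1$ it reduces to $\delta^2 \mu_1 = 0$, which is the hypothesis on $\mu_1$ (see Remark \ref{rbrem1}). For the inductive step, suppose $\mu_t^{(n)}$ is a deformation of order $n$ and consider the $(n+1)$th obstruction cochain $Ob_{n+1}(A) \in C^3_0(A;A)$. By the theorem just proved, $Ob_{n+1}(A)$ is a $3$-cocycle. Since the hypothesis $H^3(A;A)=0$ forces in particular $H^3_0(A;A)=0$, there exists $\mu_{n+1} \in C^2_0(A;A)$ with $\delta^2 \mu_{n+1} = Ob_{n+1}(A)$. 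I then set $\mu_t^{(n+1)} := \mu_t^{(n)} + \mu_{n+1} t^{n+1}$.

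It remains to verify that $\mu_t^{(n+1)}$ is a deformation of order $n+1$, i.e.\ that equation \ref{rbeqn1} holds for every $r \le n+1$. For $r \le n$ the addition of $\mu_{n+1} t^{n+1}$ contributes nothing to coefficients of $t^r$, so the relation is the induction hypothesis. For $r = n+1$, the key identity \ref{rbeqn200} rearranges the associativity condition exactly into the statement $Ob_{n+1}(A) = \delta^2 \mu_{n+1}$, which holds by the choice of $\mu_{n+1}$. Iterating gives the formal deformation $\mu_t = \sum_{i \ge 0}\mu_i t^i$ whose infinitesimal is the prescribed $\mu_1$, completing the argument.

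The main subtlety in this plan is a bookkeeping one rather than a conceptual one: ensuring that the sign and parity conventions line up so that the obstruction in $C^3_0(A;A)$ is literally $\delta^2 \mu_{n+1}$ (and not its negative, or a cochain of the wrong $\mathbb{Z}_2$-degree). This is handled by observing that every $\mu_i$ produced inductively lies in $C^2_0(A;A)$, so all compositions $\mu_i \circ \mu_j$ used in forming $Ob_{n+1}(A)$ are of degree $(3,0)$, and by checking directly from \ref{rbSAS8} applied to a degree-zero $2$-cochain that $\delta^2 \mu_{n+1}$ matches the right-hand side of \ref{rbeqn200}. Beyond that, the proof is a routine application of the preceding theorem together with the vanishing of $H^3(A;A)$.
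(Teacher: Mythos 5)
Your induction via obstruction cochains is correct and is exactly the argument the paper intends: the corollary is stated there as an immediate consequence of the theorem that $Ob_{n+1}(A)$ is a $3$-cocycle, with the extension step $\delta^2\mu_{n+1}=Ob_{n+1}(A)$ supplied by $H^3(A;A)=0$ (hence $H^3_0(A;A)=0$, and a degree-$0$ primitive can be chosen since $\delta$ preserves the $\mathbb{Z}_2$-degree). Your verification of the order-$(n+1)$ condition via Equation \ref{rbeqn200} matches the paper's sign conventions, so no gap remains.
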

\begin{defn}
Let   $\mu_t$  and $\tilde{\mu_t}$ be two formal deformations of an associative  superalgebra $A=A_0\oplus A_1$. A formal isomorphism from the deformation $\mu_t$ to $\tilde{\mu_t}$  is a $K[[t]]$-linear automorphism $\Psi_t:A[[t]]\to A[[t]]$ given by  $$\Psi_t=\sum_{i=0}^{\infty}\psi_it^i,$$ where each $\psi_i$ is a homogeneous $K$-linear map $A\to A$ of degree $0$, $\psi_0(a)=a$, for all $a\in A$ and $$\tilde{\mu_t}(\Psi_t(a),\Psi_t(b))=\Psi_t\circ\mu_t(a,b),$$ for all $a,b\in A.$\\
 We call two  deformations $\mu_t$  and $\tilde{\mu_t}$ of an associative  superalgebra $A$ to be equivalent if there exists a formal isomorphism  $\Psi_t$ from $\mu_t$ to  $\tilde{\mu_t}$.
Observe that  Formal isomorphism on the collection of all  formal deformations of an associative  superalgebra $A$ is an equivalence relation.
 We call  a formal deformation of $A$ that is equivalent to the deformation $\mu_0$  a trivial deformation.
\end{defn}
\begin{thm}
 The cohomology class of the infinitesimal of a  deformation $\mu_t$ of   an associative  superalgebra $A$ is same for  each member of  equivalence class of $\mu_t$.
\end{thm}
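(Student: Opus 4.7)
The plan is to compare the two sides of the equivalence identity $\tilde{\mu}_t(\Psi_t(a),\Psi_t(b)) = \Psi_t(\mu_t(a,b))$ coefficient by coefficient in $t$ and read off the desired statement from the $t^1$ term. Since both $\mu_t$ and $\tilde\mu_t$ are formal deformations of the same associative superalgebra $A$, the coefficient of $t^0$ on both sides is automatically $\mu_0(a,b) = ab$, so nothing is gained at that order. The content is entirely at order $t^1$.

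Write $\Psi_t = \mathrm{Id}_A + \psi_1 t + \psi_2 t^2 + \cdots$, $\mu_t = \sum_i \mu_i t^i$ and $\tilde\mu_t = \sum_i \tilde\mu_i t^i$. Expanding both sides of the equivalence relation and collecting the coefficient of $t^1$ yields
\begin{equation*}
\tilde\mu_1(a,b) + \mu_0(\psi_1(a),b) + \mu_0(a,\psi_1(b)) \;=\; \psi_1(\mu_0(a,b)) + \mu_1(a,b),
\end{equation*}
that is, using the abbreviated product $\mu_0(x,y) = xy$,
\begin{equation*}
\tilde\mu_1(a,b) - \mu_1(a,b) \;=\; \psi_1(ab) - \psi_1(a)\,b - a\,\psi_1(b).
\end{equation*}

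The right-hand side is exactly $-\delta^1\psi_1(a,b)$ by formula \ref{SLCOH2}, noting that $\psi_1$ is homogeneous of degree $0$ (so the sign $(-1)^{x_1 f}$ is trivial). Since $\psi_1 \in C_0^1(A;A)$, we conclude that $\tilde\mu_1 - \mu_1 = -\delta^1\psi_1 \in B_0^2(A;A)$, so $\tilde\mu_1$ and $\mu_1$ represent the same class in $H_0^2(A;A)$.

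There is no real obstacle here: the only thing to verify carefully is the sign and parity of the identification with $\delta^1\psi_1$, and the fact that the equivalence condition $\psi_0 = \mathrm{Id}_A$ together with $\tilde\mu_0 = \mu_0$ makes the $t^0$ comparison trivial so that all information resides at first order. I would state the proof essentially as the display above, explicitly citing formula \ref{SLCOH2} for $\delta^1$ and concluding with the statement that the cohomology class $[\mu_1] = [\tilde\mu_1]$ in $H_0^2(A;A)$ depends only on the equivalence class of $\mu_t$.
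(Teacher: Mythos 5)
Your proposal is correct and follows essentially the same route as the paper: expand the equivalence condition $\tilde{\mu}_t(\Psi_t(a),\Psi_t(b))=\Psi_t(\mu_t(a,b))$, compare coefficients of $t^1$, and recognize $\mu_1-\tilde{\mu}_1=\delta^1\psi_1$ (your sign convention $\tilde{\mu}_1-\mu_1=-\delta^1\psi_1$ is the same identity), so the infinitesimals differ by a coboundary in $B_0^2(A;A)$. Your version is in fact slightly more explicit than the paper's, which states the $t^1$ identity directly without showing the expansion.
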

\begin{proof}
If   $\Psi_t$ is a formal  isomorphism  from  $\mu_t$ to  $\tilde{\mu_t}$, the   we have, for  all $a,b\in A$,  $\tilde{\mu_t}(\Psi_ta,\Psi_tb)=\Psi_t\circ \mu_t(a,b)$. In particular, we have \begin{eqnarray*}
     (\mu_1-\tilde{\mu_1})(a,b)&=& \mu_0(\psi_1a,b)+\mu_0( a,\psi_1b)-\psi_1(\mu_0( a,b))\\
     &=& \delta^1\psi_1(a,b).
\end{eqnarray*}
Thus we have $\mu_1-\tilde{\mu_1}=\delta^1\psi_1$. This completes the proof.
\end{proof}

\bibliographystyle{alpha}
\bibliography{cohom-deform-of-ass-superalgebra}

\end{document}